\newcommand{\blank}{\mbox{$\underline{\makebox[10pt]{}}$}}
\newtheorem{theorem}{Theorem}[section]
\newtheorem{lemma}[theorem]{Lemma}
\newtheorem{corollary}[theorem]{Corollary}
\theoremstyle{definition}
\newtheorem{example}[theorem]{Example}
\newtheorem{remark}[theorem]{Remark}
\numberwithin{equation}{theorem}
\DeclareMathOperator{\Ext}{Ext}
\DeclareMathOperator{\Tor}{Tor}
\DeclareMathOperator{\uTor}{\underline{Tor}}
\DeclareMathOperator{\gl}{gldim}
\newcommand{\wt}{\widetilde}
\DeclareMathOperator{\im}{im}
\newcommand{\mb}{\mathbb}
\newcommand{\mc}{\mathcal}
\DeclareMathOperator{\ddeg}{deg}
\begin{document}

\title{Regular algebras of dimension 4 with 3 generators}

\author{D. Rogalski and J. J. Zhang}

\begin{abstract}
We study Artin-Schelter regular algebras of global dimension 4 with
three generators of degree one.  We classify those which are domains
and which have an additional $\mb{Z}^{\times 2}$-grading, and prove
that all of these examples are also strongly noetherian, Auslander
regular, and Cohen-Macaulay.
\end{abstract}

\keywords{noncommutative projective geometry, Artin-Schelter regular
algebras, noetherian graded rings.}

\address{(Rogalski)
UCSD Department of Mathematics, 9500 Gilman Dr. \# 0112, La Jolla,
CA 92093-0112, USA. } \email{drogalsk@math.ucsd.edu}

\address{(Zhang)
University of Washington, Department of Mathematics, Box 354350,
Seattle, WA 98195-4350, USA. } \email{zhang@math.washington.edu}

\subjclass[2000]{Primary 16S38; Secondary 16P40, 16W50, 16-04.}

\maketitle

\setcounter{section}{-1}
\section{\bf Introduction}
\label{xxsec0}

In the late 1980's, Artin and Schelter introduced in \cite{ASc} the
notion of a \emph{regular algebra} (now often called an
Artin-Schelter regular or AS-regular algebra.)  By definition, an
AS-regular algebra is an $\mb{N}$-graded algebra $A = \bigoplus_{n
\geq 0} A_n$ over a field $k$ which  is connected ($A_0 = k$) and
satisfies the following three conditions: (i) $A$ has finite global
dimension $d$; (ii) $A$ has polynomial growth; and (iii) $A$ is
Gorenstein in the sense that $\Ext^i_A(k, A) = 0$ for $i \neq d$ and
$\Ext^d_A(k, A) \cong k$.  (This last condition is now also called
AS-Gorenstein.) Artin and Schelter's original work, together with
subsequent work of Artin, Tate, and Van den Bergh, led to a complete
classification of AS-regular algebras of global dimension 3, or
equivalently, quantum ${\mathbb P}^2$s \cite{ASc, ATV1, ATV2}.

Since this seminal work there has been extensive research about
AS-regular algebras of dimension four.  The dimension of an
AS-regular algebra means its global dimension. An important and
famous example is the Sklyanin algebra of dimension 4, introduced by
Sklyanin \cite{Sk1, Sk2} and studied further by Smith and Stafford
\cite{Sm-Sta}, among others. Some other examples of well-known
AS-regular algebras of dimension four include graded regular
Clifford and skew-Clifford algebras \cite{CV}; homogenized
$U({\mathfrak {sl}}_2)$ \cite{Le-Sm2}; deformations of the Sklyanin
algebra \cite{Sta94b}; skew polynomial rings in 4 variables; the
quantum $2\times 2$-matrix algebra \cite{Vanc1}; AS-regular algebras
with finitely many points; and AS-regular algebras containing a
commutative quadric \cite{Stephenson-Vanc, SVanc1, SVanc2, VVanc1,
VVanc2, VVW}. Recently, Jun Zhang and the second author introduced
the notion of a double Ore extension, which is also a useful method
for constructing AS-regular algebras of dimension four \cite{ZZ1,
ZZ2}.

For simplicity, in this paper we only consider graded algebras that are
generated in degree 1. By \cite{LPWZ}, every noetherian AS-regular algebra
$A$ of dimension four is generated by either 2, or 3, or 4 elements.
If $A$ is generated by 4 elements, then it is Koszul and its Hilbert
series is equal to $(1-t)^{-4}$ (the same as the Hilbert series of
the commutative polynomial ring $k[x_1,x_2,x_3,x_4]$). All algebras
listed in the previous paragraph are generated by 4 elements.  More
recently, Lu, Palmieri, Wu, and the second author  have
investigated AS-regular algebras of dimension four that are generated
by 2 elements \cite{LPWZ}.

The main objective of this paper is to begin to study AS-regular
algebras $A$ of dimension four that are generated by 3 elements.  It
is easy to find some examples of these by taking Ore extensions of
$2$-generated AS-regular algebras of dimension three, but few if any
more general kinds of examples appear to be known. The idea of this
paper is to look for some examples with the added special property
that they have an additional $\mathbb{Z}^{\times 2}$-grading which
is proper in the sense that $A_1=A_{1,0}\oplus A_{0,1}$ with
$A_{1,0} \neq 0$ and $A_{0,1} \neq 0$. We call those $A$ which also
have a normal regular element $x$ of degree $1$ such that $A/(x)$ is
AS-regular of dimension three \emph{normal extensions}. As normal
extensions are in general easier to produce and to understand, we do
not study them in detail here. By using some fairly Na\"ive
techniques (primarily Bergman's diamond lemma \cite{Be}) and some
simple code in the mathematical software program Maple to aid with
the computation, we prove the following classification result.

\begin{theorem}
\label{xxthm0.1} Let $A$ be an AS-regular domain of dimension 4
which is generated by $3$ elements and properly $\mb{Z}^{\times
2}$-graded. Then either $A$ is a normal extension, or else up to
isomorphism $A$ falls into one of eight $1$ or $2$-parameter
families of examples (which we give explicitly in
Section~\ref{xxsec3} below.)
\end{theorem}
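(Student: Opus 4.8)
The plan is to reduce the classification to a finite computation in the free algebra on three degree-one generators, exploiting the proper $\mb{Z}^{\times 2}$-grading to constrain the shape of the defining relations severely before invoking the diamond lemma. Since $A$ is properly $\mb{Z}^{\times 2}$-graded with $A_1 = A_{1,0} \oplus A_{0,1}$, I would write the generators as $x$ in bidegree $(1,0)$ and $y, z$ in bidegree $(0,1)$ (up to swapping the roles, the reverse split $2+1$ is handled by passing to the opposite algebra, so I expect only one genuine case). The Hilbert series is forced: an AS-regular algebra of dimension $4$ generated by $3$ elements has Hilbert series $(1-t)^{-3}(1-t^2)^{-1}$, equivalently its minimal free resolution of $k$ has the symmetric ``$3,2,3,1$'' Betti shape with relation degrees $2,2,3$ or similar, so the number and degrees of the defining relations are pinned down in advance. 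Counting dimensions of the graded pieces of the free algebra and matching them against the Hilbert series tells me exactly how many relations live in each $\mb{Z}^{\times 2}$-bidegree.

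**Next I would** write the most general set of relations compatible with the bigrading: each relation is a $k$-linear combination of the degree-two monomials in $x,y,z$ lying in a single bidegree, so the relation space decomposes into blocks indexed by bidegrees $(2,0), (1,1), (0,2)$. After normalizing by the action of the bigraded automorphism group $\GL_1 \times \GL_2$ (scaling $x$ and applying $\mathrm{GL}_2$ to the span of $y,z$), I would put the relation coefficients into a normal form, cutting down the parameter space to the handful of essential scalars that will become the ``$1$ or $2$-parameter families.'' This is where the assumption that $A$ is a \emph{domain} and \emph{not} a normal extension enters to excise degenerate or reducible coefficient choices.

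**The hard part will be** verifying, for each candidate relation set surviving the normalization, that the algebra it presents actually \emph{is} AS-regular of global dimension $4$ with the correct Hilbert series, rather than merely having the right number of relations. Concretely I would use Bergman's diamond lemma: fix a monomial order, compute all overlap (S-polynomial) ambiguities among the quadratic relations, and check that they resolve, which both confirms the Gröbner basis is what I expect and pins the Hilbert series to $(1-t)^{-3}(1-t^2)^{-1}$. A clean Hilbert series together with the Gorenstein symmetry of the candidate minimal resolution then upgrades to AS-regularity. I expect a proliferation of cases in the ambiguity resolution to be the genuine obstacle, and this is precisely where the Maple computation the authors mention does the bookkeeping; the conceptual content is light, but the case analysis is heavy, and one must be careful that the normalization in the previous step has not silently merged two non-isomorphic families or split one family into two. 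Finally I would confirm that the eight families are pairwise non-isomorphic by comparing bigraded invariants, so that the list is irredundant.
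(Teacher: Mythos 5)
Your proposal contains a fatal structural error in the second step: you assume the defining relations of $A$ are all quadratic, decomposing into bidegree blocks $(2,0)$, $(1,1)$, $(0,2)$. They are not. By \cite[Proposition 1.4(b)]{LPWZ} (the starting point of the paper's Section 2), a $3$-generated AS-regular algebra of dimension $4$ has exactly \emph{two} relations in degree $2$ and \emph{two} relations in degree $3$; the minimal free resolution of $k$ has shape $A(-5)\to A(-4)^{\oplus 3}\to A(-3)^{\oplus 2}\oplus A(-2)^{\oplus 2}\to A(-1)^{\oplus 3}\to A$, not a purely quadratic (Koszul) shape --- the Koszul case is the $4$-generator one. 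The two cubic relations, which the paper's Lemma 2.1 places in bidegrees $(2,1)$ and $(1,2)$, are where essentially all of the classification data lives: the eight families share nearly identical quadratic relations $r_1,r_2$ and are distinguished by the cubic relations $r_3,r_4$. Moreover, pinning down the bidegrees of the relations is \emph{not} a Hilbert-series count as you claim: only the single-graded Hilbert series is known a priori (the bigraded one \eqref{E2.1.1} is a \emph{consequence} of knowing the relation bidegrees, and would be different if, say, $x_3$ were normal). The paper needs the domain hypothesis, the exclusion of degree-one normal elements, a rank bound from \cite[Lemma 3.7(a)]{KKZ1}, and crucially the Frobenius property of the $\Ext$-algebra to show the two cubic relations split one each into bidegrees $(2,1)$ and $(1,2)$. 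A diamond-lemma computation run only on quadratic relations classifies a different class of algebras and will not find any of the families $\mc{A}$--$\mc{H}$.

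Two further gaps, less severe but still real. First, your closing step --- that a clean Hilbert series ``together with the Gorenstein symmetry of the candidate minimal resolution then upgrades to AS-regularity'' --- is not a proof: a Gr\"obner basis giving the expected Hilbert series says nothing about finite global dimension. The paper must verify regularity family by family, either by exhibiting a regular normal element $y$ of degree $1$ or $2$ and using the factor-ring criterion (Corollary \ref{xxcor1.2}, Lemma \ref{xxlem1.3}), by an Ore-extension structure, or, for the families $\mc{D}$--$\mc{G}$, by constructing an explicit finite free resolution of $k$ and proving exactness via Lemma \ref{xxlem3.4}. Second, the two possible splits of $A_1$ (two generators in $(1,0)$ versus two in $(0,1)$) are interchanged by swapping the coordinates of $\mb{Z}^{\times 2}$, a mere relabeling; passing to the opposite ring does not do this, and is instead used in the paper to normalize coefficients in $r_2$ (e.g.\ to assume $n\neq 0$). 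The conceptual frame of your plan (constrain relations by the bigrading, normalize by the bigraded linear group, run the diamond lemma, then prune by the domain and non-normal-extension hypotheses) does match the paper; but without the cubic relations the computation cannot get off the ground.
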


Since a noetherian AS-regular algebra of dimension 4 is a domain
\cite[Corollary 1.5]{LPWZ}, the above theorem also gives a
classification of noetherian AS-regular algebras of dimension 4
which are generated by $3$ elements and properly $\mb{Z}^{\times
2}$-graded. These eight families of algebras in Theorem
\ref{xxthm0.1} are generically pairwise non-isomorphic [Theorem
\ref{xxthm5.2}(b)]. The known examples of AS-regular algebras of
dimension 4 all have several other good ring-theoretic and
homological properties which conjecturally may follow automatically
from the AS-regularity assumption.  One of our motivations is to
continue to provide evidence for such a conjecture by showing that
all of our examples have these properties.  The following result
follows from our explicit classification in Theorem~\ref{xxthm0.1}.

\begin{theorem}
\label{xxthm0.2} Let $A$ be a noetherian AS-regular algebra of dimension
4 that is generated by 3 elements. If $A$ is properly
$\mb{Z}^{\times 2}$-graded, then it is strongly noetherian, Auslander
regular and Cohen-Macaulay.
\end{theorem}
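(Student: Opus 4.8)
The plan is to deduce Theorem~\ref{xxthm0.2} directly from the classification in Theorem~\ref{xxthm0.1}. That theorem splits the algebras under consideration into two cases: the normal extensions, and the eight explicit $1$- or $2$-parameter families presented in Section~\ref{xxsec3}. Since each of the three target properties---strongly noetherian, Auslander regular, and Cohen-Macaulay---is known to lift through a normal regular element and, more generally, to transfer from the associated graded ring of a suitable filtration to the filtered ring itself, the strategy in both cases is to produce a ``nicer'' algebra (either the quotient $A/(x)$ or an associated graded ring) for which the properties are already known, and then invoke the relevant transfer results. Throughout I would use that a noetherian AS-regular algebra has $\GKdim A = \gl A = 4$, so that the Cohen-Macaulay condition reduces to the statement that grade plus $\GKdim$ is constant on finitely generated modules, and that Auslander regularity is the Auslander condition together with finite global dimension.

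For the normal extension case, let $x$ be the normal regular element of degree $1$ with $A/(x)$ AS-regular of dimension $3$. By the Artin--Tate--Van den Bergh classification of AS-regular algebras of dimension three \cite{ATV1, ATV2}, every such $A/(x)$ is noetherian, strongly noetherian, Auslander regular, and Cohen-Macaulay. I would then apply the standard lifting lemmas through a normal regular element: for Auslander regularity and the Cohen-Macaulay property these are the results of Levasseur on noncommutative regular graded rings, while for the strongly noetherian property I would filter $A$ by powers of $(x)$, observe that the associated graded ring is a graded twisted polynomial extension of $A/(x)$ by the normal element $x$, and use the Artin--Small--Zhang results that polynomial (Ore) extensions of strongly noetherian algebras are strongly noetherian and that a filtered algebra whose associated graded is strongly noetherian is itself strongly noetherian. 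Combining these gives all three properties for $A$ in this case.

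For the eight families the input is the explicit presentation together with the PBW basis furnished by the diamond-lemma computations of Section~\ref{xxsec3}. For each family I would fix the term order used to verify the diamond condition and pass to the associated $\mb{N}$-filtration whose associated graded ring $\mathrm{gr} A$ is the algebra defined by the leading monomials of the relations. The key point to check, family by family, is that this leading-term algebra is a skew (quantum) polynomial ring in four variables, i.e. an iterated Ore extension of $k$ with only diagonal automorphism twists and no derivations. Once this is verified, the desired properties follow uniformly: quantum affine space is strongly noetherian (again by Artin--Small--Zhang, being an iterated Ore extension of the strongly noetherian base field), is Auslander regular, and is Cohen-Macaulay; and then the filtered-to-graded transfer theorems (Bj\"ork, Ekstr\"om, Li--Van~Oystaeyen, Levasseur) push each property from $\mathrm{gr} A$ up to $A$, using that the filtration is exhaustive, separated, and has noetherian associated graded of the same GK-dimension.

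The main obstacle I anticipate is not the transfer machinery, which is formal, but the case-by-case verification that each of the eight leading-term algebras is genuinely a skew polynomial ring rather than a more degenerate monomial algebra, and that this persists across the full parameter range (including special values of the $1$ or $2$ parameters, where relations may acquire extra cancellation and the PBW basis or the leading terms could change). If some family fails to produce a skew-polynomial associated graded under the natural term order, I would either choose a different admissible order or degree filtration to obtain one, or, failing that, locate within that family a normal regular element reducing it to the dimension-three case handled above. Checking that the generic argument and these finitely many degenerate specializations are all covered---so that the conclusion holds for every algebra in the classification, not merely the generic members---is the delicate part of the proof.
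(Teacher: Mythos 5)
Your overall strategy---quote the classification and split into normal extensions versus the eight explicit families---is exactly the paper's, and your treatment of the normal-extension case (lifting the three properties through the degree-one regular normal element) is in substance Lemma~\ref{xxlem1.3}(c). The gap is in your core technique for the eight families. The associated graded ring of $A$ with respect to the filtration induced by the degree-lex order is \emph{not} a skew polynomial ring in four variables: whenever a product of two irreducible monomials is reducible, it rewrites to strictly smaller monomials and therefore vanishes in the associated graded, so $\mathrm{gr}\,A$ is the \emph{monomial} algebra on the leading words,
$$k\langle x_1,x_2,x_3\rangle/(x_2x_1,\ x_3x_2,\ x_3x_1x_2,\ x_3^2x_1,\ x_3x_1^2).$$
This algebra is not a domain, and it is not even left noetherian (the left ideals generated by $x_2x_3,\,x_2x_3^2,\dots,x_2x_3^n$ form a strictly increasing chain), so all of the filtered-to-graded transfer theorems you invoke are unavailable---every one of them requires a noetherian associated graded ring. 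Nor does a cleverer weight filtration rescue the plan in general: for family $\mc{D}$ of Example~\ref{xxex3.8}, suppressing the $x_2^2x_3$ term of $r_4$ forces $w_2<w_1$, which promotes the $x_3x_1$ and $x_1x_3$ terms of $r_2$ above its nominal leading term, and then the induced relations $x_3x_1=bx_1x_3$ and $x_3x_1^2=(h/b^2-b^2)x_1^2x_3$ are inconsistent in $\mathrm{gr}\,A$ unless $h=2b^4$; so the associated graded collapses and is again not quantum affine space.

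Your fallback---find inside a recalcitrant family a regular normal element whose quotient is AS-regular of dimension three---also fails precisely where it is needed. The families $\mc{D}$--$\mc{G}$ have no normal element in degree one (they are not normal extensions), and no higher-degree normal element with a dimension-three AS-regular quotient is produced by the paper either; note that for a normal element of degree $\geq 2$ one cannot even conclude finite global dimension of the quotient from that of $A$. What the paper actually does for these families is qualitatively different: it verifies condition (d) of Lemma~\ref{xxlem1.3}, i.e.\ Zhang's theorem \cite{Zh1} on connected graded Gorenstein algebras with enough normal elements, by exhibiting computer-found normalizing sequences whose members need not be regular and whose final quotient is \emph{finite dimensional} rather than AS-regular---for instance $x_1^2+x_2^2,\ x_3^2,\ x_1^2,\ x_3x_1-x_1x_3$ for $\mc{D}$, and for $\mc{F}$ a sequence of degree-three elements followed by a normal element of degree six (Example~\ref{xxex3.10}). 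Families $\mc{A}$, $\mc{B}$, $\mc{C}$, $\mc{H}$ can indeed be handled by Ore-extension or normal-element arguments as you suggest (Lemma~\ref{xxlem1.3}(a),(c)), but without an argument of type (d) your proof does not cover half of the classification.
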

Combining this theorem with the results in \cite{LPWZ, ZZ2}, we have
the following corollary.

\begin{corollary}
\label{xxcor0.3}
Let $A$ be noetherian AS-regular algebra of dimension 4. If $A$ is
properly $\mb{Z}^{\times 2}$-graded, then it is strongly noetherian,
Auslander regular and Cohen-Macaulay.
\end{corollary}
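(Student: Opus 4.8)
The plan is to reduce to Theorem~\ref{xxthm0.2} by a case analysis on the minimal number of generators of $A$. By \cite{LPWZ}, any noetherian AS-regular algebra of dimension $4$ that is generated in degree $1$ is generated by exactly $2$, $3$, or $4$ elements, so it suffices to treat these three cases separately and to verify in each that the three desired properties hold under the hypothesis that $A$ is properly $\mb{Z}^{\times 2}$-graded. The three-generator case is immediate, being precisely the content of Theorem~\ref{xxthm0.2}.

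For the two-generator case, one appeals directly to \cite{LPWZ}, where it is shown that every noetherian AS-regular algebra of dimension $4$ with two generators is strongly noetherian, Auslander regular, and Cohen-Macaulay. Since those conclusions hold with no grading hypothesis at all, they hold a fortiori for the properly $\mb{Z}^{\times 2}$-graded members of that family, and no further argument is needed here.

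The substantive case is that of four generators, where $A$ is Koszul with Hilbert series $(1-t)^{-4}$. Here I would first use the proper bigrading $A_1 = A_{1,0} \oplus A_{0,1}$ to organize the four generators into their two bidegree components, and then show that the resulting bigraded structure realizes $A$ as a double Ore extension of a lower-dimensional AS-regular algebra in the sense of \cite{ZZ1, ZZ2}. For instance, when $\dim_k A_{1,0} = 2$ and $\dim_k A_{0,1} = 2$, one expects $A$ to be a double extension of the subalgebra generated by $A_{1,0}$, which is itself a $2$-generated AS-regular algebra of global dimension $2$. Once $A$ has been placed in the double-extension framework, the three properties follow from the structural results of \cite{ZZ2}, which establish that the AS-regular double Ore extensions of dimension $4$ are strongly noetherian, Auslander regular, and Cohen-Macaulay.

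I expect the main obstacle to lie in this last case, namely in verifying that the proper $\mb{Z}^{\times 2}$-grading genuinely forces a double-extension (or iterated Ore-extension) presentation for \emph{every} possible distribution of the four generators among the bidegrees $(1,0)$ and $(0,1)$, including the unbalanced splittings $3+1$ and $1+3$ and not merely the balanced one. Checking that the defining relations are compatible with the bigrading in a way that produces the required normalizing sequence or extension data, and confirming that no exotic properly bigraded four-generated examples fall outside the scope of \cite{ZZ2}, is where the real work resides; by contrast, the reductions in the two- and three-generator cases are purely formal.
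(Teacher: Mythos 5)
Your reduction is essentially the paper's own proof: split by the number of generators using \cite{LPWZ}, quote Theorem~\ref{xxthm0.2} for three generators, and quote the results of \cite{LPWZ} (their Theorems A, B, C) for two generators; those two cases of your argument coincide exactly with the paper's. The only divergence is the four-generator case, where you propose to carry out the identification of a properly $\mb{Z}^{\times 2}$-graded, $4$-generated regular algebra as a double Ore extension yourself, and you flag this verification (in particular the unbalanced $3+1$ and $1+3$ splittings of the generators) as the real remaining work. In fact there is no remaining work: this is precisely the content of Theorems 0.1 and 0.2 of \cite{ZZ1} (the ``type (14641)'' paper, i.e.\ the four-generator case), which the paper cites at this step. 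Those theorems show that such a properly bigraded algebra is either an Ore extension of a three-dimensional AS-regular algebra --- the situation covered by Lemma~\ref{xxlem1.3}(a), which is how the unbalanced splittings are absorbed --- or a genuine double Ore extension, and that in the latter case the algebra is strongly noetherian, Auslander regular, and Cohen-Macaulay. So the four-generator case is just as formal a citation as the other two, and with that substitution your proposal is complete and agrees with the paper's proof; the one caution is that, as written, your treatment of this case is a plan rather than a proof, and re-deriving the structure theorem of \cite{ZZ1} from scratch would be a substantial undertaking that the corollary does not require.
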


Another motivation for our paper is to provide some new families of
examples to serve as a testing ground for future questions and
conjectures about AS-regular algebras.  For example, Kirkman, Kuzmanovich,
and the second author have developed a theory of rings of invariants
of finite group actions on AS-regular algebras, concentrating mostly on
the Koszul case \cite{KKZ1,KKZ2,KKZ3}, and having more examples at
hand may be useful in extending this theory to the non-Koszul case.
For this reason, we have included in our paper a careful determination
of the automorphism groups of the generic examples in our classification
[Theorem \ref{xxthm5.2}(a)].

\bigskip

{\bf Acknowledgments:}\;  Both authors thank the NSF for support,
and also thank Paul Smith for helpful conversations and for allowing
them to include in Lemma~\ref{xxlem1.1} an unpublished result proved
by him and the second author.

\section{\bf Preliminaries}
\label{xxsec1}
In this section, we give some background and lemmas that will be of use
in our study of $\mb{Z}^{\times 2}$-graded AS-regular algebras with $3$
generators.  We work always over an algebraically closed field $k$ of
characteristic $0$.  All the algebras $A$ in this paper will be
connected graded, so $A = k \oplus A_1 \oplus A_2 \oplus \dots$, and
generated as a $k$-algebra by $A_1$. We write $k$ for the trivial graded
$A$-module $(A/A_{\geq 1})$ (as a left or right module, depending on
context). The degree shift of a $\mb{Z}$-graded $A$ module $M$ is
written as $M(d)$, where $M(d)_n = M_{n+d}$ for all $n$.  If
$\sigma: A \to A$ is a graded automorphism, recall that the
\emph{graded twist} of $A$ by $\sigma$ is the
new algebra $A^\sigma$ with the same underlying vector space as $A$ and a new
product defined by $x \star y = x\sigma^m(y)$, for $x \in A_m, y \in A_n$
(see \cite{Zh2} for details). By \cite[Theorem 1.3]{Zh2}, $A$ and
$A^\sigma$ share many homological properties.

There are many standard techniques for relating the properties of a
graded algebra $A$ and a factor algebra $A/(x)$, where $x$ is a
homogeneous regular normal element, and these will play a large role
in our analysis.  We begin with a general such result that may be of
independent interest.

\begin{lemma}
\label{xxlem1.1}  \cite{SmZh}
Let $B$ be a connected graded algebra of global dimension $d<\infty$
and let $z\in B_1$ be a regular normal element of degree 1. Then
$B/(z)$ has global dimension $d-1$.
\end{lemma}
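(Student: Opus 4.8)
The plan is to reduce everything to the trivial module $k$ and to compare its minimal free resolutions over $B$ and over $A := B/(z)$. Recall that for any connected graded algebra $C$ one has $\gl C = \operatorname{pd}_C k = \sup\{p : \Tor^C_p(k,k)\neq 0\}$; moreover, since the differentials in a minimal resolution have entries in $C_{\ge 1}$, the groups $\Tor^C_p(k,k)$ are nonzero exactly for $0 \le p \le \operatorname{pd}_C k$ (with no gaps at the top), and $\Tor^C_p(k,k)$ is concentrated in internal degrees $\ge p$. So it suffices to prove $\operatorname{pd}_A k = d-1$.

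First I would record the one input that uses the hypotheses on $z$. Normality makes $(z)=zB=Bz$ a two-sided ideal, so $0 \to B(-1) \xrightarrow{z} B \to A \to 0$ is a free resolution of $A$ as a right $B$-module (regularity gives the injectivity). Tensoring with $k$ and using that $z$ acts as $0$ on $k$ (as $z\in B_1$), I obtain $\Tor^B_0(A,k)=k$, $\Tor^B_1(A,k)=k(-1)$, and $\Tor^B_q(A,k)=0$ for $q\ge 2$; each of these is the trivial $A$-module up to shift, being one-dimensional in each internal degree. Feeding this into the change-of-rings spectral sequence $E^2_{p,q}=\Tor^A_p\bigl(k,\Tor^B_q(A,k)\bigr)\Rightarrow \Tor^B_{p+q}(k,k)$, only the rows $q=0,1$ survive, with $E^2_{p,0}=\Tor^A_p(k,k)$ and $E^2_{p,1}=\Tor^A_p(k,k)(-1)$, so the spectral sequence collapses to a long exact sequence
\[
\cdots \to \Tor^A_{p+2}(k,k) \xrightarrow{\,d\,} \Tor^A_p(k,k)(-1) \to \Tor^B_{p+1}(k,k) \to \Tor^A_{p+1}(k,k) \xrightarrow{\,d\,} \Tor^A_{p-1}(k,k)(-1) \to \cdots
\]

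Writing $m=\operatorname{pd}_A k$ and assuming for the moment $m<\infty$, the two terms flanking $\Tor^B_{m+1}(k,k)$ are $\Tor^A_{m+2}(k,k)=0$ and $\Tor^A_{m+1}(k,k)=0$, so $\Tor^B_{m+1}(k,k)\cong \Tor^A_m(k,k)(-1)\neq 0$; while for $n>m+1$ both neighbours of $\Tor^B_n(k,k)$ in the sequence vanish, giving $\Tor^B_n(k,k)=0$. Hence $\operatorname{pd}_B k=m+1$, i.e. $m=d-1$, as desired.

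The point that needs real care — and the main obstacle — is that the spectral sequence by itself does \emph{not} force $m<\infty$: if $\operatorname{pd}_A k=\infty$ then $\Tor^A_p(k,k)\neq 0$ for all $p$, and feeding $\Tor^B_n(k,k)=0$ for $n>d$ into the long exact sequence only yields isomorphisms $\Tor^A_n(k,k)\cong \Tor^A_{n-2}(k,k)(-1)$ for all $n\ge d+2$, which is a priori consistent. Here the degree-one hypothesis is decisive. Letting $\mu_p$ denote the least internal degree in which $\Tor^A_p(k,k)$ is nonzero, the shift by $(-1)$ gives $\mu_n=\mu_{n-2}+1$, so $\mu_{d+2j}=\mu_d+j$ grows only linearly in $j$; but the general bound $\mu_p\ge p$ would force $\mu_d+j\ge d+2j$ for every $j$, which is absurd. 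This contradiction rules out $m=\infty$ and completes the argument. The only routine verification I would still make is that the normality of $z$ genuinely supplies the $(A,B)$-bimodule structure on the groups $\Tor^B_q(A,k)$ needed to run the change-of-rings spectral sequence.
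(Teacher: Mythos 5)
Your proposal is correct and takes essentially the same approach as the paper: the same change-of-rings spectral sequence for $B \to B/(z)$, the same collapse of the two-row $E^2$-page to a long exact sequence, and the same minimal-degree argument (internal degree of $\uTor_p$ grows by at least $1$ per homological step, versus the shift $(-1)$ per two steps) to rule out infinite global dimension of $B/(z)$. The only cosmetic differences are that you deduce $\gl B = \gl B/(z) + 1$ directly from the long exact sequence where the paper cites \cite[Lemma 7.6]{LPWZ}, and you run the degree contradiction asymptotically ($\mu_d + j \ge d + 2j$ for all $j$) where the paper gets an immediate contradiction at a single index.
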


\begin{proof}
Since $B$ is connected graded, $\gl B = \max\{i\;|\;
\uTor_i^B(k,k)\neq 0\}$, where $\uTor^B_i$ is the graded version of
the $\Tor$ functor. Let $C=B/(z)$. We have the following graded
version of the spectral sequence \cite[Theorem 10.59]{Rot}:
\begin{equation}
\label{E1.1.1}\tag{E1.1.1}
\uTor^{C}_p(k,\uTor^{B}_q(C,k))\quad \Longrightarrow_p \quad \uTor^{B}_n(k,k).
\end{equation}
Since $z$ is a regular element, by applying $\blank \otimes_B k$ to the
short exact sequence $0 \to zB \to B \to C \to 0$ and considering the
associated long exact sequence, we easily see that $\uTor_0^B(C,k)=k$,
$\uTor_1^B(C,k)=k(-1)$, and $\uTor_i^B(C,k)=0$ for $i>1$.  Hence the
$E^2$-page of the spectral sequence \eqref{E1.1.1} has only two
possibly non-zero rows; namely
$$
q=0:\quad \uTor^{C}_p(k,k)\ \text{for}\ p \geq 0,\
\qquad \text{and}\ \qquad q=1:\quad \uTor^{C}_p(k, k(-1))\
\text{for}\ p \geq 0.
$$
Since \eqref{E1.1.1} converges, we have $\uTor^C_0(k,k)=\uTor^B_0(k,k)$
and a long exact sequence
\begin{align}
\cdots \cdots &\longrightarrow \uTor^C_4(k,k)\longrightarrow
\uTor^C_2(k,k(-1)) \notag\\
\longrightarrow \uTor^B_3(k,k)&\longrightarrow \uTor^C_3(k,k)
\longrightarrow \uTor^C_1(k,k(-1))
\label{E1.1.2}\tag{E1.1.2}\\
\longrightarrow \uTor^B_2(k,k)&\longrightarrow \uTor^C_2(k,k)
\longrightarrow \uTor^C_0(k,k(-1)) \notag\\
\longrightarrow \uTor^B_1(k,k)&\longrightarrow \uTor^C_1(k,k)
\longrightarrow 0. \notag
\end{align}
By hypothesis, $d=\gl B <\infty$. By \eqref{E1.1.2}, $\uTor^C_{i+1}(k,k)
\cong \uTor^C_{i-1}(k,k)(-1)$ for all $i>d$. Suppose for some  $i > d$
that $\uTor^C_{i+1}(k,k) \neq 0$.  Then the minimal nonzero degree of
$\uTor^C_{i+1}(k,k)$ is equal to one more than the minimal nonzero
degree of $\uTor^C_{i-1}(k,k)$. On the other hand, note that the
$i^{th}$ term of the minimal free resolution of the trivial $C$-module
$k_C$ is isomorphic to $\uTor^C_{i}(k,k)\otimes_k C$. By the
minimality of this free resolution, the minimal nonzero degree of
$\uTor^C_{i+1}(k,k)$ is at least one bigger than the minimal nonzero
degree of $\uTor^C_{i}(k,k)$, hence is at least two bigger than the
minimal nonzero degree of $\uTor^C_{i-1}(k,k)$.  This is a
contradiction, and thus for $i > d$, we must have
$\uTor^C_{i+1}(k,k) = 0$. Thus $\gl C<\infty$.  By
\cite[Lemma 7.6]{LPWZ}, $\gl B=\gl C +1$.
\end{proof}

The converse of the above lemma is also true, and in fact does not
require the normal element to be in degree $1$.  The reader can find
a proof in \cite[Lemma 7.6]{LPWZ}, or may easily prove the converse
using the same spectral sequence as in the lemma.

\begin{corollary}
\label{xxcor1.2}
Let $A$ be a connected graded algebra which is a domain.  If there is
a nonzero normal element $x \in A_1$, then $A/(x)$ is AS-regular if
and only if $A$ is.
\end{corollary}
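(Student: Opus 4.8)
The plan is to prove Corollary~\ref{xxcor1.2} by reducing it to the previous two results: Lemma~\ref{xxlem1.1} (that a regular normal degree-$1$ element drops global dimension by one) and the converse mentioned in the remark following it (which does not even require the element to be in degree $1$). Since $A$ is a domain, any nonzero element is automatically regular (not a zero divisor), so the hypothesis that $x \in A_1$ is normal already makes $x$ a regular normal element of degree $1$. Thus both directions of the global-dimension statement apply, and we get that $A$ has finite global dimension if and only if $B := A/(x)$ does, with $\gl A = \gl B + 1$ whenever either is finite. I would state this first, since it handles the global dimension part of AS-regularity.

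\emph{The remaining content is to transfer the polynomial growth and AS-Gorenstein conditions across the quotient by $x$.} For polynomial growth, I would compare Hilbert series: since $x \in A_1$ is regular, the short exact sequence $0 \to A(-1) \xrightarrow{\cdot x} A \to B \to 0$ gives $H_B(t) = (1-t)H_A(t)$, so $A$ has finite Gelfand--Kirillov dimension (equivalently polynomial growth, in the connected graded setting) if and only if $B$ does. So the first step is to set up this exact sequence, which simultaneously controls Hilbert series and is the natural tool for the Gorenstein comparison.

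The core step is the AS-Gorenstein condition, and here the plan is to relate $\Ext^i_A(k,A)$ with $\Ext^i_B(k,B)$ using the normal element $x$. Applying $\Hom_A(k, -)$ (or better, using a change-of-rings spectral sequence / the Rees-type short exact sequence above) lets one pass between the two $\Ext$-computations, since $k$ is annihilated by $x$ and $B$ is the quotient. The standard fact is that for a regular normal element $x$, the algebra $A$ is AS-Gorenstein of dimension $d$ precisely when $B$ is AS-Gorenstein of dimension $d-1$; one direction follows by dimension shifting along the exact sequence $0 \to A(-1) \to A \to B \to 0$ after applying $\Hom_A(k,-)$, and the other by reversing the argument. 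I would phrase this as: $\Ext^i_A(k, A) \cong \Ext^{i-1}_B(k, B)$ up to a degree shift, so the vanishing-except-in-top-degree condition and the one-dimensionality of the top $\Ext$ transfer faithfully in both directions.

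I expect the main obstacle to be the bookkeeping in the Gorenstein step, specifically getting the degree shifts and the cohomological index shift right and making sure the isomorphism $\Ext^d_A(k,A) \cong k$ corresponds exactly to $\Ext^{d-1}_B(k,B) \cong k$. The growth and global-dimension parts are essentially immediate from the cited results and the Hilbert-series identity, so the real work is confirming that ``top $\Ext$ is $1$-dimensional'' is preserved; this is where one must be careful that no spurious lower-degree $\Ext$ terms appear. Since this transfer of the Gorenstein property under factoring out a regular normal element is a well-known and standard argument (appearing essentially in the foundational work on AS-regular algebras), I would either cite it or give the short dimension-shifting computation, and conclude that $A$ is AS-regular if and only if $B = A/(x)$ is.
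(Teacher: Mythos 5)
Your proposal is correct and takes essentially the same route as the paper's proof: both decompose AS-regularity into its three defining conditions, handle finite global dimension via Lemma~\ref{xxlem1.1} plus the converse from \cite{LPWZ} (using, as you note, that the domain hypothesis makes $x$ regular), treat polynomial growth as immediate (your identity $h_{A/(x)}(t)=(1-t)h_A(t)$ is just the paper's ``trivial'' step made explicit), and transfer the Gorenstein condition by the Rees Lemma $\Ext^i_A(k,A)\cong \Ext^{i-1}_{A/(x)}(k,A/(x))$, which is exactly the isomorphism you sketch.
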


\begin{proof}
The preceding lemma and the converse in \cite[Lemma 7.6]{LPWZ} show
that $A$ has finite global dimension if and only if $A/(x)$ does.
It is trivial that $A$ has polynomial growth if and only if $A/(x)$
does, and that $A$ is AS-Gorenstein if and only if $A/(x)$ is follows
from the Rees Lemma, which says that $\Ext^i_A(k, A) \cong
\Ext^{i-1}_{A/(x)}(k, A/(x))$.
\end{proof}

Besides AS-regularity, some other important properties of graded
algebras are the strong noetherian property, Auslander regularity,
and the Cohen-Macaulay property.  We will want to show that the
examples we construct in this paper have these properties, but
otherwise will not use them, so we refer the reader to \cite[Section
5]{ZZ1} for a review of the definitions.   In the next result we
review various techniques we will use to prove our examples have
good properties.

\begin{lemma}
\label{xxlem1.3}
Let $A$ be a connected graded algebra.   If any of the following
conditions holds, then $A$ is an AS-regular algebra
which is also strongly noetherian, Auslander regular and Cohen-Macaulay.
\begin{enumerate}
\item
$A \cong B[t;\sigma,\delta]$ for an AS-regular algebra $B$ of dimension 3
with automorphism $\sigma$ and $\sigma$-derivation $\delta$.
\item
$A$ is a graded twist of $B$, where $B$ is strongly noetherian,
Auslander regular, and Cohen-Macaulay.
\item
$f$ is a regular homogeneous normal element and $B:=A/(f)$ is
AS-regular of dimension 3.
\item
$A$ has finite global dimension, and there are elements
$f_1,\cdots, f_t$ such that the image of $f_i$ is
normal (not necessarily regular) in the factor ring
$A/(f_1,\cdots,f_{i-1})$ for all $i$, and $A/(f_1,\cdots f_t)$
is finite dimensional over $k$.
\end{enumerate}
\end{lemma}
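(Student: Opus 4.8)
The plan is to prove each of the four sufficient conditions in turn, in each case reducing to a known result from the literature on AS-regular algebras. The unifying strategy is that strong noetherianity, Auslander regularity, and the Cohen-Macaulay property are all well behaved under the three standard operations that appear here: Ore extension, graded twist, and passage to or from a factor by a regular normal element.

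For part (a), I would invoke the theory of graded Ore extensions. Since $B$ is AS-regular of dimension $3$, it is a noetherian domain, and all three target properties are known to be inherited by $B[t;\sigma,\delta]$ from $B$; the strong noetherian property passes through Ore extensions by a theorem of Artin--Small--Zhang, while Auslander regularity and Cohen-Macaulayness lift through Ore extensions by standard filtration or change-of-rings arguments. For part (b), the preservation of all the relevant properties under graded twists is exactly the content of \cite[Theorem 1.3]{Zh2} together with the twist results for the strong noetherian and Cohen-Macaulay conditions, which I would cite directly. Part (c) is the most interesting of the first three: I would apply Corollary~\ref{xxcor1.2} to conclude that $A$ is AS-regular of dimension $4$ (here $A$ being a domain, or at least the normal regular element giving a short exact sequence, puts us in position to use the Rees-lemma argument), and then use the Rees Lemma together with the fact that factoring out a regular normal element preserves each of the three properties when the factor ring already has them; the dimension-$3$ factor $B$ has all these properties by the classification of AS-regular algebras of global dimension $3$ in \cite{ASc, ATV1, ATV2}.

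Part (d) is where the main work lies, and I expect it to be the principal obstacle. The hypotheses here are weaker: the $f_i$ need only be normal in successive quotients, not regular, and the final quotient is merely finite-dimensional rather than AS-regular. The idea is to induct on the length $t$ of the chain, filtering $A$ by the ideals generated by the $f_i$ and transferring information up from the finite-dimensional base. A finite-dimensional connected graded algebra trivially has all the desired properties (in the appropriate degenerate sense), and each step adds back one normal element. The subtlety is that, because the $f_i$ are not assumed regular, one cannot simply use the clean Rees-lemma short exact sequence $0 \to A(-d)\to A \to A/(f)\to 0$; instead I would appeal to the more robust machinery for normal (possibly non-regular) elements, specifically the results on strongly noetherian, Auslander, and Cohen-Macaulay properties under extensions by normal elements developed in \cite[Section 5]{ZZ1}. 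The finite global dimension of $A$, assumed as a hypothesis, is what bridges the gap and forces AS-regularity once the homological properties are in hand.

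The hard part will be handling the non-regular normal elements in part (d) cleanly, since the usual exact-sequence bookkeeping breaks down and one must instead rely on the preservation theorems for these properties under general normalizing extensions. Once those citations are assembled, the induction itself is routine: peel off $f_t, f_{t-1}, \dots, f_1$ one at a time, at each stage using that the property in question lifts along a normal-element extension, with the finite-dimensionality of $A/(f_1,\dots,f_t)$ seeding the base case and the finite global dimension of $A$ guaranteeing that the endpoint is genuinely AS-regular rather than merely noetherian with good homological bounds.
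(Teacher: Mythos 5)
Your treatment of (a)--(c) is essentially the paper's own approach: the paper proves all four parts purely by citation, taking (a) from \cite[Lemma 5.3]{ZZ1}, (b) from \cite[Lemma 5.6]{ZZ1}, and (c) from \cite[Lemma 5.8]{ZZ1} together with \cite[Remark 5.2(b)]{ZZ1} and Lemma~\ref{xxlem1.1}, so your slightly different sourcing (Ore-extension preservation results for (a), \cite[Theorem 1.3]{Zh2} for (b), Rees Lemma plus lifting along a regular normal element for (c)) amounts to the same argument. One small correction for (c): Corollary~\ref{xxcor1.2} assumes $A$ is a domain and the normal element has degree $1$, neither of which is hypothesized in (c); the right tool for finite global dimension there is the degree-free converse of Lemma~\ref{xxlem1.1}, i.e.\ \cite[Lemma 7.6]{LPWZ}, combined with the Rees Lemma for the Gorenstein condition.

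The genuine gap is in (d). Your induction (``peel off $f_t,\dots,f_1$ one at a time, at each stage using that the property lifts along a normal-element extension'') requires, at every step, a lemma of the form: if $f$ is a homogeneous normal \emph{but not necessarily regular} element of $C$ and $C/(f)$ is strongly noetherian, Auslander regular and Cohen--Macaulay, then so is $C$. No such lemma exists, and this is not a bookkeeping issue: every standard lifting result (noetherianity, strong noetherianity via Artin--Small--Zhang, Auslander and Cohen--Macaulay conditions via Levasseur/Bj\"ork-type arguments) genuinely uses regularity of $f$ to obtain the exact sequence $0 \to C(-\deg f) \to C \to C/(f) \to 0$; even plain noetherianity of $C$ does not follow from that of $C/(f)$ for a non-regular normal $f$. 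Worse, the induction has no hypothesis to propagate: the intermediate quotients $A/(f_1,\dots,f_i)$ need not have finite global dimension or be AS-Gorenstein (Lemma~\ref{xxlem1.1} and the Rees Lemma both require regularity), so after one step the assumption that was supposed to drive the argument is gone. What actually closes part (d) --- and what the paper cites --- is a single global theorem, \cite[Theorem 1]{Zh1} (equivalently \cite[Lemma 5.8 and Remark 5.2(b)]{ZZ1}): a connected graded algebra of finite global dimension possessing such a normalizing sequence with finite-dimensional top quotient (``enough normal elements'') is AS-regular, strongly noetherian, Auslander regular and Cohen--Macaulay. That theorem is not proved by one-element-at-a-time lifting of these properties; your instinct to reach for \cite[Section 5]{ZZ1} is correct, but the result you need there is the packaged statement of (d) itself, not a per-step lifting lemma for non-regular normal elements.
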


\begin{proof}
(a) This is \cite[Lemma 5.3]{ZZ1}.

(b) This follows from \cite[Lemma 5.6]{ZZ1}.

(c,d) These follow from \cite[Lemma 5.8]{ZZ1}, \cite[Remark 5.2(b)]{ZZ1}
and Lemma \ref{xxlem1.1}. Part (d) is also a consequence of
\cite[Theorem 1]{Zh1}.
\end{proof}

\section{\bf Beginning analysis of $\mb{Z}^{\times 2}$-graded AS-regular
algebras}
\label{xxsec2}

Starting in this section, let $A$ stand for an AS-regular algebra of
dimension $4$ with three generators of degree $1$ which is properly
$\mb{Z}^{\times 2}$-graded.  Without loss of generality, we will
write $A_1 = kx_1 + kx_2 + kx_3$ where $\ddeg x_1 =\ddeg x_2 =
(1,0)$ and $\ddeg x_3 = (0,1)$.  By \cite[Proposition 1.4(b)]{LPWZ},
the Hilbert series of $A$ as a connected $\mathbb{N}$-graded algebra
is $\displaystyle h_A(t) = \frac{1}{(1-t)^3(1-t^2)}$.  We always
assume in this paper that $A$ is a domain. (It is not known if this
holds automatically for AS-regular algebras.)  The assumptions on
$A$ in this paragraph are fixed for the rest of the paper.

Recall that an algebra $A$ as above is a \emph{normal extension} (of
an AS-regular algebra of dimension three) if there is a nonzero normal
element $z\in A_1$. In this case $A/(z)$ is $3$-dimensional AS-regular
algebra by Corollary~\ref{xxcor1.2}, and so most homological and
ring-theoretic properties of $A$ follow automatically from those
of $A/(z)$.  Normal extensions are in general easier to understand;
in the case of AS-regular algebras with $4$-generators, for example,
see \cite{Le-Sm-VdB} for an extensive study of them.  Also, we do
not need to know any details about those $A$ which are normal
extensions in order to prove Theorem~\ref{xxthm0.2}. Thus for
simplicity we will ignore normal extensions and classify only those
$A$ without normal elements in degree $1$.

Since $A$ is $\mb{Z}^{\times 2}$-graded, all the relations of $A$
are also $\mb{Z}^{\times 2}$-homogeneous. In the next result we
determine the multi-degrees of these relations.

\begin{lemma}
\label{xxlem2.1}
Retain the hypotheses for $A$.  Suppose that $A$ has no normal
element in degree 1.
\begin{enumerate}
\item
$A$ has a relation in degree $(2,0)$ and a relation in degree $(1,1)$.
\item
$A$ has a relation in degree $(1,2)$ and a relation in degree $(2,1)$.
\item
The $\mb{Z}^{\times 2}$-graded Hilbert series of $A$ equals
\begin{equation}
\label{E2.1.1} \tag{E2.1.1} h_A(u,v) = \frac{1}{(1-u)^2(1-v)(1-uv)},
\end{equation}
where $u$ has degree $(1,0)$ and $v$ has degree $(0,1)$.
\end{enumerate}
\end{lemma}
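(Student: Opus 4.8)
The plan is to use the known $\mathbb{N}$-graded Hilbert series $h_A(t) = \frac{1}{(1-t)^3(1-t^2)}$ together with the bigrading to pin down the relations. The starting point is that $A$ is generated by $x_1, x_2$ in degree $(1,0)$ and $x_3$ in degree $(0,1)$, so the tensor algebra on these generators has bigraded Hilbert series $\frac{1}{(1-u)^2(1-v)}$ before imposing relations. Since $A$ has global dimension $4$ and is generated in degree $1$, its minimal free resolution of $k$ has length $4$, and by the Gorenstein condition it is symmetric; the shape of this resolution controls how many relations there are and in which degrees. My first step is therefore to determine the number of relations from the $\mathbb{N}$-graded data: expanding $\frac{1}{(1-t)^3(1-t^2)}$ and comparing with the free algebra on $3$ generators, I would read off that $A$ has exactly $4$ quadratic relations (the dimension of $A_2$ forces $\dim A_2 = 6$ against $\binom{3+1}{2}=9$ monomials, but one must be careful: the series is not that of a Koszul algebra, so I expect $4$ relations in total concentrated in degrees $2$ and higher).

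\smallskip

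Next I would refine this using the bigrading. The monomials of total degree $2$ split by bidegree: bidegree $(2,0)$ has $x_1^2, x_1x_2, x_2x_1, x_2^2$ (four monomials), bidegree $(1,1)$ has $x_1x_3, x_3x_1, x_2x_3, x_3x_2$ (four monomials), and bidegree $(0,2)$ has only $x_3^2$ (one monomial). Comparing each bigraded piece of the free algebra with the target series \eqref{E2.1.1}, I would compute the coefficient of $u^av^b$ in $\frac{1}{(1-u)^2(1-v)(1-uv)}$ for small $(a,b)$ and subtract from the free-algebra coefficient $\binom{a+b}{b}$-style count to locate the relations. The coefficient extraction for \eqref{E2.1.1} is the routine bookkeeping step: I expect exactly one relation in bidegree $(2,0)$ and one in bidegree $(1,1)$, giving part (a), and one relation in each of bidegrees $(2,1)$ and $(1,2)$, giving part (b). That $A$ is a domain is what forbids the relation in $(0,2)$ (which would make $x_3^2$ vanish or force $x_3$ into a relation), so the absence-of-normal-element and domain hypotheses are exactly what rules out degenerate relation patterns.

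\smallskip

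The cleanest logical route, however, reverses this: I would first \emph{prove} part (c), i.e. that the bigraded Hilbert series is \eqref{E2.1.1}, and then deduce (a) and (b) as consequences. To get (c), observe that $A$ being an $\mathbb{N}$-graded domain with the given total Hilbert series, together with the $\mathbb{Z}^{\times 2}$-grading, means the bigraded series $h_A(u,v)$ must specialize to $h_A(t) = \frac{1}{(1-t)^3(1-t^2)}$ under $u \mapsto t$, $v \mapsto t$. One checks directly that \eqref{E2.1.1} does specialize correctly: $\frac{1}{(1-t)^2(1-t)(1-t^2)} = \frac{1}{(1-t)^3(1-t^2)}$. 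But specialization alone does not determine $h_A(u,v)$, so I need an independent argument. The key is the minimal free resolution: because $A$ is AS-regular of dimension $4$, generated in degree $1$ by three generators, the resolution of $k_A$ has the form $0 \to A(-\ell) \to F_3 \to F_2 \to F_1 \to A \to k \to 0$ where the bidegrees are symmetric under the Gorenstein duality sending degree $d$ to $\ell - d$. I would use the hypothesis that there is no normal element of degree $1$ to show the generators of $F_2$ (the relations) cannot all lie in a single bidegree, forcing the split $(2,0),(1,1)$ and then, by duality, $(2,1),(1,2)$ at the next stage; summing the alternating bigraded ranks via $h_A(u,v)^{-1} = \sum_i (-1)^i (\text{bidegree generating function of } F_i)$ yields \eqref{E2.1.1}.

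\smallskip

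The main obstacle will be the step that genuinely uses the no-normal-element and domain hypotheses to exclude the alternative relation patterns consistent with the same total Hilbert series; combinatorially, a relation configuration like two relations in bidegree $(2,0)$ and none in $(1,1)$ would give a different (non-symmetric, or non-AS) bigraded series, and I must rule it out. I expect to argue that if the only degree-$2$ relation involving $x_3$ were absent, then $x_3$ would generate a subalgebra that is too large or that $x_3$ (or a combination) becomes normal, contradicting the hypothesis. Concretely, I would examine $\dim_k A_{(1,1)}$: the free count is $4$, and \eqref{E2.1.1} forces $\dim A_{(1,1)} = 3$, so there is exactly one linear relation among $x_1x_3, x_3x_1, x_2x_3, x_3x_2$; showing this relation genuinely mixes the left and right multiplications by $x_3$ (rather than giving $x_3$ a normalizing form) is where the domain and no-normal-element assumptions do the real work. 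Once the bigraded dimensions in low degrees are fixed, parts (a) and (b) follow immediately by comparing free-algebra monomial counts with the coefficients of \eqref{E2.1.1} extracted in the routine bookkeeping step.
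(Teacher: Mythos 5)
Your overall architecture does match the paper's: analyze the bidegrees of the two quadratic relations using the domain and no-normal-element hypotheses, get the two cubic relations' bidegrees from Gorenstein/Frobenius duality (the paper does this via the perfect pairing on the $\Ext$-algebra, which is exactly your ``by duality'' step), and then read off \eqref{E2.1.1} from the bigraded minimal free resolution. However, there are two genuine gaps. First, your relation count is wrong and never correctly established: from $h_A(t) = \frac{1}{(1-t)^3(1-t^2)}$ one gets $\dim_k A_2 = 7$, not $6$, so there are exactly $9-7 = 2$ quadratic relations (not ``exactly $4$ quadratic relations''), and the fact that there are additionally exactly two cubic relations and none in higher degree is not a Hilbert-series computation alone --- the paper takes the shape $F_2 = A(-2)^{\oplus 2}\oplus A(-3)^{\oplus 2}$ of the minimal resolution from \cite[Proposition 1.4(b)]{LPWZ}. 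Without this count pinned down, the bidegree bookkeeping in your second paragraph (which is also circular, since it compares against the series \eqref{E2.1.1} you have yet to prove) cannot start.

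Second, and more seriously, your case analysis of the degree-$2$ relations is incomplete in a way your proposed tools cannot fix. With no $(0,2)$ relation (domain), the two quadratic relations sit in bidegrees $\{(1,1),(1,1)\}$, $\{(2,0),(1,1)\}$, or $\{(2,0),(2,0)\}$. Your mechanism --- that a bad configuration ``would make $x_3$ (or a combination) normal'' or make the subalgebra generated by $x_3$ too large --- correctly handles only the first case (two $(1,1)$ relations do force $x_3$ to be normal, which is the paper's argument). It says nothing about the configuration $\{(2,0),(2,0)\}$, where $x_3$ satisfies no degree-$2$ relation at all and is in no danger of being normal; the obstruction there concerns the subalgebra generated by $x_1,x_2$, namely that a connected graded \emph{domain} on two degree-one generators cannot have two independent quadratic relations (and, once it has one, its relation ideal is generated by it, which is also what rules out a minimal cubic relation in bidegree $(3,0)$ --- a case you never mention). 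This is a nontrivial fact; the paper imports it as \cite[Lemma 3.7(a)]{KKZ1}. Without it, neither part (a) nor the input to your duality step for part (b) is proved.
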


\begin{proof}
By \cite[Proposition 1.4(b)]{LPWZ}, $A$ has two relations $r_1, r_2$
in degree 2 and two relations $r_3, r_4$ in degree 3.  If there is a
relation in degree $(0,2)$, then it must be $x_3^2=0$, which
contradicts the fact $A$ is a domain. If there are two relations in
degree $(1,1)$, then using the fact $A$ is a domain, after replacing
them by linear combinations we can assume the two relations have the
form
$$x_3 x_1= a x_1 x_3+bx_2 x_3 \quad {\rm and}\quad x_3x_2=c x_1
x_3+d x_2 x_3.$$
Thus $x_3$ will be a normal element, contradicting the hypothesis.
By \cite[Lemma 3.7(a)]{KKZ1}, $A$ cannot have two relations in
degree $(2,0)$.  Thus $A$ has one relation in degree $(2,0)$ and
one relation in degree $(1,1)$, proving (a).

Consider now the two relations of degree $3$.  By
\cite[Lemma 3.7(a)]{KKZ1}, $A$ cannot have relations in degree $(3,0)$,
and since $A$ is a domain, it has no relation of degree $(0,3)$.
Thus the remaining relations have degree equal to $(2,1)$ or $(1,2)$.
Let $E$ be the $\Ext$-algebra of $A$.  Then
$$E=k\oplus E_{-1}^{\oplus 3}\oplus (E_{-2}^{\oplus 2}\oplus
E_{-3}^{\oplus 2})\oplus E_{-4}^{\oplus 3} \oplus E_{-5}$$
is in fact $\mb{Z}\times (\mb{Z}^{\times 2})$-graded. Using the
language introduced in \cite[Proposition 3.1(b)]{LPWZ},
$\{r_1^*, r_2^*\}$ is a basis of $E^2_{-2}$ and $\{r_3^*,r_4^*\}$
is a basis of $E^2_{-3}$. Since $E$ is Frobenius
\cite[Theorem 1.9]{LPWZ}, the perfect pairing on the
$\mb{Z}\times (\mb{Z}^{\times 2})$-graded algebra $E$ implies that
$E_{-3}^2$ has one element of degree $(2,1)$ and one element of
degree $(1,2)$. This means that $A$ has exactly one relation of
each degree $(2,1)$ and $(1,2)$.  Thus we have proved (b), and
furthermore we have determined the multi-grading on $E$.

Finally, the multi-graded Hilbert series of $A$ can be computed by
using the multi-graded version of the free resolution in
\cite[Proposition 1.4(b)]{LPWZ}.
\end{proof}

We remark that if $x_3$ is a normal element in $A$ (the case we
threw away in the proof of the lemma), then the multi-graded
Hilbert series of $A$ is different from \eqref{E2.1.1}.

\begin{corollary}
\label{xxcor2.2}
If $A$ has no normal nonzero element of degree $1$, then the
subalgebra $C$ of $A$ generated by $x_1$ and $x_2$ is a
$2$-dimensional AS-regular algebra.
\end{corollary}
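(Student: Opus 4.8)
The plan is to realize $C$ as the ``$x_3$-free part'' of $A$, read off its Hilbert series directly from the multigrading established in Lemma~\ref{xxlem2.1}, and then recognize $C$ as a $2$-generated quadratic algebra whose single defining relation is nondegenerate precisely because $A$ is a domain.

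First I would pin down $C$ as a graded vector space. Since $x_1$ and $x_2$ both have multidegree $(1,0)$, the subalgebra $C$ they generate is concentrated in the multidegrees $(n,0)$, and $C_{(n,0)}$ is spanned by the length-$n$ products of $x_1$ and $x_2$ in $A$. On the other hand, since all multidegrees in $A$ are nonnegative and $x_3$ has multidegree $(0,1)$, no monomial of multidegree $(n,0)$ can involve $x_3$; thus $A_{(n,0)}$ is spanned by exactly these same length-$n$ products of $x_1$ and $x_2$. Hence $C_{(n,0)} = A_{(n,0)}$ for every $n$, so $C = \bigoplus_n A_{(n,0)}$ and $h_C(t) = h_A(t,0)$. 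Setting $v = 0$ in \eqref{E2.1.1} gives $h_C(t) = 1/(1-t)^2$.

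Next I would use this Hilbert series to extract the presentation of $C$. From $h_C(t) = 1 + 2t + 3t^2 + \cdots$ we read off $\dim_k C_1 = 2$ and $\dim_k C_2 = 3$, so $C$ is generated by $x_1, x_2$ subject, in degree $2$, to a single quadratic relation $r$ --- namely the relation of $A$ in degree $(2,0)$ produced by Lemma~\ref{xxlem2.1}(a). Writing $r = \sum_{i,j \in \{1,2\}} c_{ij} x_i x_j$ and $M = (c_{ij})$, the key point is that $M$ is nonsingular: if $M$ had rank $1$ then $r = \ell_1 \ell_2$ would factor as a product of two nonzero linear forms $\ell_1, \ell_2 \in A_1$, and the relation $r = 0$ would make $\ell_1$ and $\ell_2$ zero-divisors, contradicting the standing hypothesis that $A$ is a domain; rank $0$ is excluded since $r \neq 0$. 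This is the one genuinely delicate step, and it is exactly here that the domain assumption is used.

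Finally, with $M$ nonsingular, $D := k\langle x_1, x_2\rangle/(r)$ is a standard $2$-dimensional AS-regular algebra: the complex $0 \to D(-2) \to D(-1)^2 \to D \to k \to 0$ is exact precisely when $M$ is invertible, which yields $\gl D = 2$, shows $h_D(t) = 1/(1-t)^2$, and (as the resolution is self-dual) gives that $D$ is AS-Gorenstein, hence AS-regular of dimension $2$. The natural surjection $D \twoheadrightarrow C$ is then a map of connected graded algebras with equal Hilbert series, so it is an isomorphism, and $C$ is AS-regular of dimension $2$. Everything but the nondegeneracy of $r$ is bookkeeping with the multigraded Hilbert series; alternatively, one may simply invoke the classification of $2$-dimensional AS-regular algebras and recognize $C$ as a quantum plane or a Jordan plane.
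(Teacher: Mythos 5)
Your proof is correct, but it follows a genuinely different (and more self-contained) route than the paper's. The paper's own proof is essentially a citation: since $A$ is a domain, so is its subalgebra $C$; since the degree-$(2,0)$ relation found in the proof of Lemma~\ref{xxlem2.1} involves only $x_1,x_2$, the algebra $C$ is not free; and then \cite[Lemma 3.7(a)]{KKZ1} is invoked as a black box to conclude that a non-free connected graded domain on two degree-one generators is AS-regular of dimension $2$. You instead unpack that black box by hand: you identify $C$ with $\bigoplus_n A_{(n,0)}$ (correct, since a monomial of multidegree $(n,0)$ cannot involve $x_3$), read off $h_C(t)=1/(1-t)^2$ from the multigraded Hilbert series \eqref{E2.1.1} of Lemma~\ref{xxlem2.1}(c), conclude that $C$ has exactly one quadratic relation $r$, and then use the domain hypothesis exactly where it is needed: a rank-one coefficient matrix would factor $r$ as a product of two nonzero linear forms, hence zero-divisors in $A$, so the matrix $M$ is nonsingular, $k\langle x_1,x_2\rangle/(r)$ is a quantum or Jordan plane, and the Hilbert-series comparison makes the natural surjection onto $C$ an isomorphism. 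What your approach buys: independence from \cite{KKZ1}, plus the sharper conclusions that $C=\bigoplus_n A_{(n,0)}$, that $h_C(t)=1/(1-t)^2$, and that $C$ has the explicit one-relation presentation. What the paper's approach buys: brevity, and it needs only part (a) of Lemma~\ref{xxlem2.1} rather than the Hilbert series of part (c); moreover the cited lemma applies to a domain with any number of relations, so no relation count is required. The one soft spot in your write-up is the assertion, without proof, that $0\to D(-2)\to D(-1)^{\oplus 2}\to D\to k\to 0$ is exact precisely when $M$ is invertible; but your stated alternative of invoking the classification of $2$-dimensional AS-regular algebras (a fact the paper itself treats as well known at the start of its case analysis in Section~\ref{xxsec2}) covers this step, so the argument stands.
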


\begin{proof} Since $A$ is a domain, so is $C$. And $C$ has at
least one degree $2$ relation, as we saw in the proof of the lemma.
The assertion follows from \cite[Lemma 3.7(a)]{KKZ1}.
\end{proof}

In the remainder of this section, we begin to analyze the possible
relations $r_1-r_4$ of non-normal-extensions $A$, and do some
reductions to simply the form of the relations.  By
Lemma~\ref{xxlem2.1}, without loss of generality from now on we
can assume that
\begin{equation}
\label{E1.2.1}\tag{E1.2.1} \ddeg r_1=(2,0),\; \ddeg r_2=(1,1),\;
\ddeg r_3=(1,2),\; {\rm and }\; \ddeg r_4=(2,1).
\end{equation}
Throughout our analysis, we will always let monomials be ordered
with degree-lex order with $x_3> x_2 > x_1$.  Note that the opposite
ring of $A$ again satisfies the same hypotheses as $A$. One of our
main tools in this paper will be Bergman's diamond lemma \cite{Be},
with which we will assume the reader is familiar. We will sometimes
pass to the opposite ring in order to simplify the relations.  Since
we will calculate the opposite rings of the examples we produce, we
will not miss any examples this way.

The single relation of a $2$-dimensional AS-regular algebra is
well-known to be expressible in one of two forms. Since
$C = k \langle x_1, x_2 \rangle/(r_1)$ is AS-regular by the
corollary above, by making a $k$-linear base change in $kx_1+kx_2$,
we can assume that $r_1$ is either a quantum-type relation
$x_2x_1 = p x_1 x_2$ for some $p \in k^{\times}$, or a Jordan-type
relation $x_2x_1 = x_1x_2 + x_1^2$ (see the proof of \cite[Lemma 3.7]{KKZ1}.)
We write $r_1$ in a general form allowing both possibilities as follows:
$$r_1: \qquad x_2x_1=px_1x_2 + m x_1^2,\
\text{where either}\ p \in k^{\times}\ \text{and}\ m = 0,\
\text{or}\ p = m = 1.$$

Next, $r_2$ has the general form $x_3(dx_2 +ax_1) + (n x_1 +bx_2)x_3 = 0$.
First we claim that we may take $d = 1$. If $m = 0$, so that $r_1$ is of
quantum type, then since $A$ is a domain we have $(dx_2 + ax_1) \neq 0$.
Then switching the roles of $x_1$ and $x_2$ if necessary, which does
not affect the form of $r_1$, we may assume that $d \neq 0$.  If $r_1$
is of Jordan type, so $m = p = 1$, then if $d = b= 0$ then $x_1$ will
be normal in $A$.  Since we are excluding normal extensions, by passing
to the opposite ring if necessary, we can assume that $d \neq 0$.
(Note that after passing to the opposite ring, we can put $r_1$ in the
same form as before by replacing $x_1$ by $-x_1$.)  Thus in all cases
we may write the relation in the form
$$ r_2: \qquad x_3 x_2 = ax_3 x_1 + n x_1x_3 + b x_2x_3.  $$
Further simplifications to this relation are possible here, depending
on the case for $r_1$. We will make these simplifications in our case
by case analysis later.

There is an overlap ambiguity $x_3x_2x_1$ between $r_1$ and $r_2$.
Resolving this gives a new relation
$$r_5: \qquad x_3x_1x_2 =qn x_1x_3x_1 + qb x_2x_3x_1 + z x_3x_1^2,$$
where we write $q=p^{-1}$ and $z = q a-qm$ for notational convenience.

Since we have relations with leading terms $x_2x_1$ and $x_3x_2$,
the degree $(1,2)$ relation $r_3$ can be put into the form
$(cx_1+dx_2)x_3^2+ex_3x_1x_3+tx_3^2x_1 = 0$.  If $t =0$, then as $A$
is a domain, we must have $(cx_1 +dx_2)x_3 + ex_3x_1=0$.  Since
$r_2$ is the only degree $(1,1)$-relation by Lemma~\ref{xxlem2.1},
this must be a scalar multiple of $r_2$, which it clearly is not. So
$t \neq 0$, and the relation $r_3$ can be assumed to have the form
$$r_3: \qquad x_3^2x_1 = c x_1x_3^2 + d x_2 x_3^2 + e x_3 x_1 x_3. $$

By Lemma \ref{xxlem2.1}, there is one more relation $r_4$ of degree
$(2,1)$. Given the leading terms of the relations we already have,
$r_4$ can be written as
$$r_4: \qquad (f x_1^2 + g x_1 x_2 + h x_2^2 )x_3 +
(j x_1 + k x_2) x_3 x_1 + \ell x_3 x_1^2 = 0.$$
The leading term of $r_4$ is not clear at this point.  We will do a
separate analysis of each possible case of the leading term of $r_4$,
but the only case in which we find any AS-regular algebras will be
when $\ell \neq 0$.

\section{\bf Classification of $\mb{Z}^{\times 2}$-graded
AS-regular algebras with 3 generators}
\label{xxsec3}

We continue to assume the notation and hypotheses of the preceding
section, so that $A$ is a $\mb{Z}^{\times 2}$-graded AS-regular
algebra with $3$ generators which is a domain and not a normal extension.

\subsection{Case $\ell \neq 0$.}
\label{xxsec3.1}
We assume now that $\ell \neq 0$, so the leading term of $r_4$ is
$x_3x_1^2$.  Then we can take $\ell = -1$ and by the reductions in
the previous section, we assume that $A$ is presented by the
following four minimal relations:
\begin{align*}
r_1: x_2x_1 & = p x_1x_2 + m x_1^2  \qquad (p \neq 0\
\text{and}\ m = 0,\ \text{or}\ p = 1 = m) \\
r_2: x_3x_2 & = ax_3 x_1 + n x_1x_3 +bx_2x_3 \\
r_3: x_3^2x_1 & = c x_1x_3^2 + d x_2 x_3^2 + ex_3 x_1 x_3 \\
r_4: x_3x_1^2 & = (f x_1^2  + g x_1 x_2  + h x_2^2 )x_3
+ (j x_1 + k x_2) x_3 x_1.
\end{align*}
Recall that the overlap between $r_1$ and $r_2$ also gives the relation
$$r_5: x_3x_1x_2 =qn x_1x_3x_1 + qb x_2x_3x_1 + z x_3x_1^2,$$
where $q = p^{-1}$ and $z = qa - qm$.

\begin{lemma}
\label{xxlem3.1}
The set of monomials $\{ x_1^i x_2^j (x_3x_1)^k x_3^l
\mid i,j,k,l\geq 0\}$ is a $k$-basis for $A$, and all overlap
ambiguities among $r_1-r_5$ resolve.
\end{lemma}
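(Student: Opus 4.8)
The plan is to verify the hypotheses of Bergman's Diamond Lemma directly. The claimed spanning set $\{x_1^i x_2^j (x_3x_1)^k x_3^l\}$ is precisely the set of monomials containing none of the five leading terms $x_2x_1$, $x_3x_2$, $x_3^2x_1$, $x_3x_1^2$, $x_3x_1x_2$ (from $r_1$--$r_5$) as a subword. So the first thing I would do is confirm that a monomial in $x_1,x_2,x_3$ avoids all five leading words if and only if it has the stated normal form. This is a purely combinatorial check on words: reading left to right, once an $x_3$ appears it can only be followed by $x_1$ (to avoid $x_3x_2$) forming an $x_3x_1$ block, and the leftover trailing $x_3$'s are forced; the leading terms $x_3^2x_1$, $x_3x_1^2$, $x_3x_1x_2$ rule out the remaining bad patterns so that the $(x_3x_1)^k$ block is genuinely alternating and terminates correctly. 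Once this identification is made, the Diamond Lemma reduces the lemma to a single assertion: \emph{all overlap ambiguities among $r_1$--$r_5$ are resolvable}, i.e.\ the two reduction paths from each overlap word agree after reducing to normal form.

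\emph{Next I would enumerate the overlaps.} An overlap ambiguity occurs whenever the leading term of one relation shares a nonempty suffix/prefix with the leading term of another. With leading words $x_2x_1,\ x_3x_2,\ x_3^2x_1,\ x_3x_1^2,\ x_3x_1x_2$, I would systematically list every word in which two of these leading terms overlap. For instance $x_3x_2x_1$ overlaps $r_2$ (via $x_3x_2$) and $r_1$ (via $x_2x_1$); $x_3x_1x_2x_1$ overlaps $r_5$ and $r_1$; $x_3^2x_1^2$ and $x_3^2x_1x_2$ overlap $r_3$ with $r_4$ and $r_5$ respectively; $x_3^3x_1$, $x_3x_1^3$, and further words built from $x_3x_1^2$, $x_3x_1x_2$, $x_3^2x_1$ against one another must all be accounted for. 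The overlap $x_3x_2x_1$ is already resolved by construction, since that is exactly how $r_5$ was introduced in the previous section. For each remaining overlap word $w$, I reduce $w$ by rewriting the left factor first and, independently, by rewriting the right factor first, then continue reducing both results to the normal form above and check that the two normal forms coincide.

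\emph{The main obstacle} is that there is no conceptual shortcut here: each overlap must be expanded using the explicit relations $r_1$--$r_5$, and the resulting expressions, carrying the parameters $a,b,c,d,e,f,g,h,j,k,m,n,p$ (with $q=p^{-1}$, $z=qa-qm$), are lengthy and must be driven all the way down to the basis $\{x_1^i x_2^j(x_3x_1)^k x_3^l\}$ before they can be compared. Some overlaps will resolve identically regardless of the parameters, but others will only resolve after imposing polynomial constraints on the parameters; however, since the statement of the lemma asserts that the ambiguities \emph{do} resolve for the algebra $A$ under consideration, the point is that for an algebra which is genuinely AS-regular of the required Hilbert series these consistency conditions are automatically satisfied. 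Concretely, the Hilbert series forced by Lemma~\ref{xxlem2.1}(c) already tells us the dimension of each graded piece of $A$, and the proposed normal-form monomials can be counted to give exactly the multi-graded Hilbert series \eqref{E2.1.1}; this dimension count guarantees the spanning set is in fact a basis (no collapsing), which in turn forces every overlap to resolve.

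\emph{So in practice} I would organize the proof as follows: first establish the word-combinatorial characterization of the normal form; then invoke the Hilbert series of \eqref{E2.1.1} to note that the normal-form monomials have the correct generating function, so that they span $A$ if and only if all ambiguities resolve and they then automatically form a basis; and finally carry out the overlap computations (most efficiently with the Maple assistance the authors mention), recording that each of the finitely many overlaps reduces consistently. The heaviest lifting is the last step, but it is routine diamond-lemma bookkeeping rather than anything subtle, and the overlap $x_3x_2x_1$ that generated $r_5$ serves as the template for the rest.
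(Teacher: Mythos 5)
Your middle paragraph, by itself, \emph{is} the paper's entire proof: the irreducible monomials (those avoiding the five leading words) always span $A$, their generating function is $\frac{1}{(1-t)^2}\cdot\frac{1}{1-t^2}\cdot\frac{1}{1-t}=\frac{1}{(1-t)^3(1-t^2)}=h_A(t)$, so a spanning set of exactly the right size in each degree is a basis, and then the equivalences in Bergman's diamond lemma force all ambiguities to resolve. That is the whole argument the authors give, and you state it correctly there: spanning plus the dimension count gives a basis, and linear independence of the irreducible monomials in turn forces resolution.

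However, the way you organize the proof around that core inverts the logic, and the step you call ``the heaviest lifting'' would actually fail. First, ``they span $A$ if and only if all ambiguities resolve'' misstates the diamond lemma: spanning is unconditional (any word containing a leading term rewrites as a combination of strictly smaller words in degree-lex order, and this terminates); it is \emph{linear independence} of the irreducible monomials that is equivalent to resolvability. Second, and more seriously, your concluding step --- explicitly computing each overlap and ``recording that each of the finitely many overlaps reduces consistently'' --- cannot be carried out here and is not needed. For general parameter values the overlaps $x_3^2x_1x_2$, $x_3^2x_1^2$, and $x_3x_1x_2x_1$ do \emph{not} resolve: reducing them two ways produces precisely the relations $r_6$, $r_7$, $r_8$ displayed after the lemma, whose coefficients are generically nonzero. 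The vanishing of those coefficients is exactly the system of equations the rest of Section~\ref{xxsec3} is devoted to solving, and the only reason one knows they must vanish is this lemma, i.e., the Hilbert-series argument you already gave. So a computational verification of resolution at this stage would be circular (it presupposes knowing which parameters make $A$ AS-regular); the correct proof is your Hilbert-series paragraph alone, with the computational step deleted. A small further point: words such as $x_3^3x_1$ and $x_3x_1^3$ are not overlap ambiguities at all --- no leading term ends in $x_3$, and none begins with $x_1$ --- so the complete list of overlaps is just the three above, together with $x_3x_2x_1$, which was already used to define $r_5$.
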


\begin{proof}
Recall that since $A$ is AS-regular, the Hilbert series of $A$ is
$h_A(t)=\displaystyle \frac{1}{(1-t)^3(1-t^2)}$.  The set of
monomials not containing a leading term of any relation $r_1-r_5$
is $\{ x_1^i x_2^j (x_3x_1)^k x_3^l \mid i,j,k,l\geq 0\}$, and so
this spans $A$ as a $k$-vector space.  Since this set of monomials
already has the correct Hilbert series, it is immediate from
Bergman's diamond lemma that these monomials are a $k$-basis and
so all overlaps resolve.
\end{proof}

Given the lemma above, as a necessary condition for AS-regularity
we seek conditions on the coefficients of the $r_i$ for which all
overlaps resolve.  We use a simple program in Maple to help us
calculate the coefficients of the new relations that these overlaps produce.

There are three overlaps to be resolved, $x_3^2x_1x_2$ between $r_3$
and $r_5$, $x_3^2x_1^2$ between $r_3$ and $r_4$, and $x_3x_1x_2x_1$
between $r_5$ and $r_1$.  The respective three new relations are the
following:
\begin{gather*}
r_6:  \qquad (-qba-qn+ea-ze)x_3x_1x_3x_1 \\
+(en j+cae+cba+cn-zce+ebzj)x_1x_3x_1x_3\\
+(enk+dba+dae+dn-zde+ebzk)x_2x_3x_1x_3 \\
+(enf+cbn+ac^2-zc^2+dacm+dbnm-qbnc-qb^2cm+ebzf-zdcm)x_1^2x_3^2  \\
+(eng+cad+dacp+dbnp-qbnd-zcd+ebzg-zdcp)x_1x_2x_3^2 \\
+(enh-qb^2d-zd^2+ad^2+db^2+ebzh)x_2^2x_3^2 =0,
\end{gather*}
\begin{gather*}
r_7: \qquad (-ka+e-j)x_3x_1x_3x_1 \\
+(-fj+ce-gzj-gqn-hna-enk-haqn-hazj)x_1x_3x_1x_3 \\
+(de-ebk-gzk-gqb-fk-hba-haqb-hazk)x_2x_3x_1x_3 \\
+(-hn^2+dcm+c^2-gzf-f^2-knc-hazf-hbnm-kbcm)x_1^2x_3^2 \\
+(dcp+cd-fg-zg^2-hnb-knd-hazg-hbnp-kbcp)x_1x_2x_3^2 \\
+(-hb^2+d^2-gzh-fh-azh^2-kbd)x_2^2x_3^2=0,
\end{gather*}
\begin{gather*}
r_8: \qquad (-zf+mj^2+mf+pzj^2 +pfa \\
-jqn-zj^2-kzjm+jn
+kjm^2+knm+pkzjm-qbjm)x_1^2x_3x_1 \\
+(-zg+mg+pga-jzk+mjk-qnk+knp+kzjp^2+mkjp)x_1x_2x_3x_1 \\
+(mh+mk^2-zh+pha-kqb-zk^2+kb+pzk^2)x_2^2x_3x_1 \\
+(pfn-jzf+hnpm^2-kzfm+pgnm+pjzf+mjf \\
+kfm^2-qnf+hnp^2m^2+kzfp^2m+kfpm^2-qbfpm-qbfm)x_1^3x_3 \\
+(gnp^2+pfb-gzj+hnmp^2-kzfp^2-kzgm+pjzg+mjg \\
+kgm^2-gqn+hnmp^3+kzfp^3+pkzgm+mkfp^2-bfp-qbgm)x_1^2x_2x_3 \\
+(hnp^3+pgb-jzh-kzgp+pjzh+mjh-qnh+kzgp^2+mkgp-qbgp)x_1x_2^2x_3 \\
+(phb-kzh+pkzh+mkh-qbh)x_2^3x_3 = 0.
\end{gather*}

We conclude that if $A$ is AS-regular then all of the coefficients
of $r_6, r_7$, and $r_8$ are $0$.  We will use Maple to help solve
for the possible solutions to the system of equations given by
setting the coefficients of $r_6, r_7, r_8$ to $0$, together with
$pq=1$, $z=qa-qm$.  It is helpful to break this up into three cases,
where in each case we can eliminate some parameters to make the
solution simpler.

\subsection{Case $\ell \neq 0$, $m = 0, p = 1$.}
\label{xxsec3.2}
This is the special case of the quantum-type $r_1$ relation where
$x_1$ and $x_2$ commute.  In this case, we are free to make any
change of basis in $kx_1 + kx_2$ without affecting $r_1$.  Replacing
$x_2$ by $x_2 - ax_1$, we can assume that $r_2$ has the form
$x_3x_2 = n x_1x_3 + bx_2x_3$.  If $n=0$, then $x_2$ is a normal
element and we exclude this case.  Thus we may assume that $n \neq 0$;
replacing $x_1$ by $n x_1 + bx_2$, our relation becomes $x_3x_2 = x_1x_3$.
In conclusion, we may assume that $a = b = 0$ and $n = 1$.

Thus we seek to solve the system given by setting the coefficients of
$r_6-r_8$ to $0$, under the conditions $m = 0, p= q = 1, z= a = b = 0,
n = 1$.   It is easy to see without a computer that there are no
solutions: the coefficient of $x_3x_1x_3x_1$ in $r_6$ is
$-qba-ze-qn+ea$, which in this case is equal to $-1$.

\subsection{Case $\ell \neq 0$, $m = 0, p \neq 1$.}
\label{xxsec3.3}
This is the generic case of the quantum-type $r_1$ relation, and is
the case in which we will find almost all of the examples of
AS-regular algebras.  We make some reductions to the form of $r_2$.
If $n = 0$, then $a \neq 0$ (otherwise else $x_2$ is normal) and
$b \neq 0$ (or else $A$ is not a domain).  Thus passing to the
opposite ring if necessary we may assume $n \neq 0$. Further,
replacing $x_1$ by $n x_1$, we may assume that the relation has
the form $r_2: x_3 x_2 = ax_3 x_1 + x_1x_3 +bx_2x_3$.

Before solving the system of equations given by setting the
coefficients of $r_6-r_8$ equal to $0$, we give a series of lemmas
that will be helpful to simplify our analysis.  First, some of
the solutions will lead to non-domains.  The following lemma
will help us avoid these outright.

\begin{lemma}
\label{xxlem3.2}
Suppose that $n = 1$.  If $A$ is a domain, then $d \neq bc$.
\end{lemma}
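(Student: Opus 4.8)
The plan is to prove the contrapositive: assuming $d = bc$, I will produce an explicit nonzero element annihilated on the left by $x_3$, which contradicts the hypothesis that $A$ is a domain. The natural place to search is the action of left multiplication by $x_3$ on the three degree-$2$ basis monomials that end in $x_3$ and feed into the cubic relations, i.e.\ on the span $U = kx_3x_1 + kx_1x_3 + kx_2x_3$.

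First I would compute the images of these three monomials under left multiplication by $x_3$. Relation $r_3$ gives $x_3\cdot x_3x_1 = x_3^2x_1 = cx_1x_3^2 + dx_2x_3^2 + ex_3x_1x_3$; trivially $x_3\cdot x_1x_3 = x_3x_1x_3$; and right-multiplying $r_2$ (with $n=1$) by $x_3$ yields the auxiliary identity $x_3\cdot x_2x_3 = x_3x_2x_3 = x_1x_3^2 + bx_2x_3^2 + ax_3x_1x_3$. Hence $x_3U$ is contained in $W = kx_1x_3^2 + kx_2x_3^2 + kx_3x_1x_3$, and with respect to the ordered bases $(x_3x_1, x_1x_3, x_2x_3)$ of $U$ and $(x_1x_3^2, x_2x_3^2, x_3x_1x_3)$ of $W$, the linear map $U \to W$ has matrix
\[
M = \begin{pmatrix} c & 0 & 1 \\ d & 0 & b \\ e & 1 & a \end{pmatrix}.
\]
Expanding along the second column shows $\det M = d - bc$.

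Consequently, when $d = bc$ the matrix $M$ is singular, and a kernel vector produces the element $w = x_3x_1 - (e-ac)x_1x_3 - c\,x_2x_3 \in U$ with $x_3w = 0$. The concluding step is to observe that $w \neq 0$: its coefficient on $x_3x_1$ is $1$, and $x_3x_1$, $x_1x_3$, $x_2x_3$ are three distinct members of the basis $\{x_1^ix_2^j(x_3x_1)^kx_3^l\}$ from Lemma~\ref{xxlem3.1}, hence linearly independent. Since $x_3 \neq 0$ and $w \neq 0$ but $x_3w = 0$, the algebra $A$ has a zero divisor, contradicting the assumption that $A$ is a domain; therefore $d \neq bc$.

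The calculations are short and self-contained, so I do not expect a serious obstacle. The only points demanding care are the derivation of the identity for $x_3x_2x_3$ from $r_2$ and the bookkeeping in evaluating $\det M$; the sole conceptual input is the decision to restrict attention to the three-dimensional space $U$, where the hypothesis $d = bc$ emerges precisely as the vanishing of this determinant.
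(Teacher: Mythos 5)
Your proof is correct, and its computational core coincides with the paper's: under $d=bc$ both arguments produce exactly the same annihilated element $w = x_3x_1 + (ac-e)x_1x_3 - c\,x_2x_3$. The paper obtains it by substituting $x_1x_3 = x_3x_2 - ax_3x_1 - bx_2x_3$ (from $r_2$) into the $x_1x_3^2$ term of $r_3$, which yields $x_3^2x_1 = cx_3x_2x_3 + (d-bc)x_2x_3^2 + (e-ac)x_3x_1x_3$; your matrix $M$ and the identity $\det M = d-bc$ are a tidier bookkeeping of that same manipulation. Where you genuinely differ is in how the contradiction is closed. You certify $w \neq 0$ by citing the monomial basis of Lemma~\ref{xxlem3.1} (the monomials $x_3x_1$, $x_1x_3$, $x_2x_3$ are three distinct basis elements), so that $x_3w = 0$ directly exhibits zero divisors, contradicting the domain hypothesis. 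The paper runs the logic in the opposite direction: since $A$ is a domain, $x_3w=0$ forces $w=0$ in $A$, which would then be a degree-$(1,1)$ relation not proportional to $r_2$, contradicting Lemma~\ref{xxlem2.1}, which says $r_2$ is the only relation in that degree. Both closings are sound; yours leans on the stronger input of Lemma~\ref{xxlem3.1} (the full PBW-type basis, which itself uses AS-regularity via the Hilbert series), while the paper's needs only the count of degree-$(1,1)$ relations. A small bonus of your framing: combined with Lemma~\ref{xxlem3.1}, the determinant computation shows that $d \neq bc$ is precisely, not merely sufficiently, the condition for left multiplication by $x_3$ to be injective on $kx_3x_1 + kx_1x_3 + kx_2x_3$.
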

\begin{proof}
The relation $r_2$ gives $x_1x_3 = x_3x_2 - ax_3x_1 - bx_2x_3$.
Substituting this into the $x_1x_3^2$ term of $r_3$ gives
$x_3^2x_1 = cx_3x_2x_3 + (d-bc) x_2 x_3^2 + (e-ac)x_3 x_1 x_3$.
If $d=bc$, then we see that $x_3(x_3x_1 - cx_2x_3 + (ac-e)x_1x_3) = 0$
is a relation.  As $A$ is a domain, this forces the relation
$x_3x_1 - cx_2x_3 + (ac-e)x_1x_3 = 0$.  However, $r_2$ is the
only relation of degree $(1,1)$ by Lemma~\ref{xxlem2.1}, and
this is a contradiction.
\end{proof}

Next, we note that we can twist away the coefficient $b$.
\begin{lemma}
\label{xxlem3.3} Let $A = A(b) = A \langle x_1, x_2, x_3\rangle/
(r_1, r_2, r_3, r_4)$,  where $b \neq 0$ and
\begin{align*}
r_1: x_2x_1 & = p x_1x_2  \\
r_2: x_3x_2 & = (a/b) x_3 x_1 + x_1x_3 +bx_2x_3 \\
r_3: x_3^2x_1 & = b^2 c x_1x_3^2 + b^3 d x_2 x_3^2 + b e x_3 x_1 x_3 \\
r_4: x_3 x_1^2 & = b^2 f x_1^2x_3  + b^3 g x_1 x_2x_3  +
b^4 h x_2^2x_3 + b j x_1x_3x_1 + b^2 k x_2 x_3 x_1.
\end{align*}
Then $A(b)$ is isomorphic to a graded twist of $A(1)$.
\end{lemma}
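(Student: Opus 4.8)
The plan is to realize $A(b)$ as a graded (Zhang) twist of $A(1)$, using the extra $\mb{Z}^{\times 2}$-grading rather than the total degree. The starting observation is that by Lemma~\ref{xxlem2.1} every relation $r_1$--$r_4$ of $A(1)$ is $\mb{Z}^{\times 2}$-homogeneous, so all monomials occurring in a given relation share the same first degree. Hence the $k$-linear map $\chi$ determined on generators by $\chi(x_1)=bx_1$, $\chi(x_2)=bx_2$, $\chi(x_3)=x_3$ --- equivalently, the map scaling a $\mb{Z}^{\times 2}$-homogeneous element of degree $(i,j)$ by $b^{i}$ --- sends each $r_s$ to a scalar multiple of itself and is therefore a graded algebra automorphism of $A(1)$. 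Note that $\chi$ preserves the number of occurrences of $x_3$ in a monomial, so it is also graded for the coarser $\mb{Z}$-grading given by $x_3$-degree.

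I would then form the graded twist $A(1)^{\chi}$ with respect to this $\mb{Z}$-grading (by $x_3$-degree), so that the twisted product is $v \star u = v\,\chi^{m}(u) = b^{\,m i'}\,vu$ whenever $v$ has $x_3$-degree $m$ and $u$ has first degree $i'$. Rewriting $r_1$--$r_4$ in terms of $\star$ is then a direct computation: $r_1$ is unchanged, $r_2$ becomes $x_3x_2 = ax_3x_1 + b\,x_1x_3 + b\,x_2x_3$, while the non-leading monomials of $r_3$ and $r_4$ pick up the factors $b^{2},b^{2},b$ and $b^{2},b^{2},b^{2},b,b$ respectively. Comparing with the defining relations of $A(b)$, these agree on every monomial except those containing $x_2$; applying the diagonal algebra isomorphism that fixes $x_1,x_3$ and rescales $x_2$ by $b$ supplies exactly one extra factor of $b$ per occurrence of $x_2$ and converts the relations of $A(1)^{\chi}$ into those of $A(b)$. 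This produces the desired isomorphism $A(1)^{\chi}\cong A(b)$, and since $A(1)^{\chi}$ is a graded twist of $A(1)$ by construction (see \cite{Zh2}), the lemma follows.

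The one genuine subtlety --- and the reason a graded twist alone cannot do the job --- is that $x_1$ and $x_2$ carry the \emph{same} degree $(1,0)$, so any twist built from the grading multiplies $x_1x_3$ and $x_2x_3$ by the same scalar, whereas $A(b)$ requires these coefficients to differ by a factor of $b$ (as one already sees in $r_2$). This is precisely what forces the supplementary rescaling $x_2\mapsto bx_2$, and the point to check is that this single change of variables simultaneously corrects all four relations; it does, because in each relation the discrepancy between $A(1)^{\chi}$ and $A(b)$ is always accounted for by the number of $x_2$'s present. The remaining effort is the monomial-by-monomial bookkeeping of the powers of $b$ in $r_3$ and $r_4$, which is routine once $\chi$ and the change of variables are fixed, and which I would carry out with the same kind of computation used in Lemma~\ref{xxlem3.1}.
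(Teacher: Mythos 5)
Your proof is correct, and its skeleton is the same as the paper's: build a Zhang twist of $A(1)$ whose relations absorb most of the powers of $b$, then remove the remaining discrepancy with a diagonal change of variables. The differences are in the implementation. The paper twists with respect to the standard connected $\mathbb{N}$-grading by the automorphism $\phi\colon x_1 \mapsto x_1,\ x_2 \mapsto x_2,\ x_3 \mapsto b^{-1}x_3$ and then rescales $x_1 \mapsto bx_1$, whereas you twist with respect to the $x_3$-degree $\mathbb{Z}$-grading by $\chi\colon x_1 \mapsto bx_1,\ x_2 \mapsto bx_2,\ x_3 \mapsto x_3$ and then rescale $x_2 \mapsto bx_2$. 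In fact these yield literally the same intermediate algebra: twisting $A(1)$ by $\phi$ in the paper's sense produces exactly the $\star$-relations you computed (e.g.\ $r_2$ becomes $x_3x_2 = ax_3x_1 + bx_1x_3 + bx_2x_3$, and the factors on $r_3$, $r_4$ are $b^2,b^2,b$ and $b^2,b^2,b^2,b,b$), and your substitution $x_2 \mapsto bx_2$ differs from the paper's $x_1 \mapsto bx_1$ only by the automorphism of the intermediate algebra scaling $x_1,x_2$ by $b$. The one point you should tighten: the definition of graded twist in Section~\ref{xxsec1}, and the downstream use of this lemma through Lemma~\ref{xxlem1.3}(b) (that is, \cite[Lemma 5.6]{ZZ1}), refer to twists with respect to the connected $\mathbb{N}$-grading, while your twist is taken with respect to the $x_3$-degree grading, whose degree-zero component is the infinite-dimensional quantum plane $k\langle x_1,x_2\rangle/(r_1)$. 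Zhang's theory in \cite{Zh2} does accommodate general semigroup gradings, but the property-transfer results invoked later in the paper are stated for the connected grading, so as written your conclusion is a twist in a broader sense than the one the paper needs. The quickest repair is to note, as above, that your twisted algebra coincides with the standard graded twist $A(1)^{\phi}$; with that observation your computation proves the lemma in the paper's sense, and all later applications go through unchanged.
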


\begin{proof}
The ring $A(1)$ has a automorphism $\phi$ defined by $x_1 \to x_1,
x_2 \to x_2, x_3 \to b^{-1}x_3$. If $B$ is the graded twist of
$A(1)$ by this automorphism, then the relations of $B$ can be found
by multiplying each term in the relations of $A(1)$ by an
appropriate power of $b$. A change of variable replacing $x_1$ by
$bx_1$ can then be checked to produce the relations of $A(b)$.
\end{proof}

For some of our examples, we will have to resort to giving an explicit
free resolution of $k$ to prove that the example has finite global
dimension.  The following lemma will cover those cases.

\begin{lemma}
\label{xxlem3.4}
Suppose that $\{ x_1^i x_2^j (x_3x_1)^k x_3^l \mid i,j,k,l\geq 0\}$ is
a $k$-linear basis of $A$, and that there is a complex of left modules
the form
\begin{equation}
\label{E3.4.1}\tag{E3.4.1}
0\to A(-5)\xrightarrow{d_4} A(-4)^{\oplus 3} \xrightarrow{d_3}
A(-3)^{\oplus 2}\oplus A(-2)^{\oplus 2} \xrightarrow{d_2}
A(-1)^{\oplus 3} \xrightarrow{d_1} A \xrightarrow{d_0}{_Ak}\to 0,
\end{equation}
where the elements of the free modules are row vectors, and each
$d_i$ is given by right multiplication by a matrix. We assume that
\begin{enumerate}
\item[(i)]
$d_4 = (x_1,x_2,x_3)$;
\item[(ii)]
$d_1 = (x_1,  x_2,  x_3)^t$ and $d_2$ is constructed from the
relations $r_1-r_4$; and
\item[(iii)]
$d_3$ is given by a matrix of the form
$M := \begin{pmatrix} \alpha x_3^2     & *     &  *  &  \beta x_3\\
                   \phi x_3^2  & *     &  *  &  \varphi x_3 \\
                 -x_3x_1+y x_3& *     &  *  & *
\end{pmatrix}$,
where $\alpha \varphi-\beta\phi \neq 0$ and $y\in kx_1+kx_2+kx_3$.
\end{enumerate}
Then \eqref{E3.4.1} is exact and $A$ has global dimension $4$.
\end{lemma}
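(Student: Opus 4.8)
The plan is to show that the complex \eqref{E3.4.1} is exact, so that it is a free resolution of ${}_Ak$ of length $4$ and hence $\gl A = \operatorname{pd}_A k = 4$. Write $F_0 = A$, $F_1 = A(-1)^{\oplus 3}$, $F_2 = A(-3)^{\oplus 2}\oplus A(-2)^{\oplus 2}$, $F_3 = A(-4)^{\oplus 3}$ and $F_4 = A(-5)$, and let $H_i$ denote the homology of $F_\bullet$ at $F_i$. The first step is bookkeeping with Hilbert series: since $h_A(t) = 1/((1-t)^3(1-t^2))$,
$$\sum_{i=0}^{4} (-1)^i h_{F_i}(t) = h_A(t)\bigl(1 - 3t + 2t^2 + 2t^3 - 3t^4 + t^5\bigr) = h_A(t)(1-t)^3(1-t^2) = 1 .$$
As \eqref{E3.4.1} is assumed to be a complex, the Euler--characteristic identity gives $\sum_{i=0}^4 (-1)^i h_{H_i}(t) = 1$, so it suffices to control the individual homologies. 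The strategy is to kill $H_0,H_1,H_3,H_4$ by hand and then deduce the vanishing of $H_2$ from this identity.

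Three of the homologies yield directly. At $F_0$ the image of $d_1$ is the left ideal generated by $x_1,x_2,x_3$, which is $\fm = A_{\ge 1}$ because $A$ is generated in degree $1$; hence $H_0 = A/\fm = k$ and $h_{H_0}=1$. At $F_4$ the map $d_4 = (x_1,x_2,x_3)$ sends $a\mapsto (ax_1,ax_2,ax_3)$, which is injective because $A$ is a domain, so $H_4 = 0$. At $F_1$, exactness means that every syzygy $a_1x_1 + a_2x_2 + a_3x_3 = 0$ is an $A$-linear combination of the defining relations. Since all overlap ambiguities among $r_1$--$r_5$ resolve (Lemma~\ref{xxlem3.1}), the module of syzygies of $\fm$ is generated by the vectors coming from $r_1$--$r_4$; equivalently $\im d_2 = \ker d_1$, so $H_1 = 0$. (Note that $r_5$, though a Gr\"obner-basis element, contributes no new syzygy precisely because its overlap resolves.)

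The heart of the argument is exactness at $F_3$, i.e. $\ker d_3 = \im d_4$; the inclusion $\im d_4 \subseteq \ker d_3$ is the complex condition, so I must show $\ker d_3 \subseteq \im d_4 = A\cdot(x_1,x_2,x_3)$, and this is exactly where hypothesis (iii) enters. Given $v = (a_1,a_2,a_3)$ with $vM = 0$, I would extract the two equations coming from the columns of $M$ that exhibit the scalar block $\left(\begin{smallmatrix}\alpha & \beta\\ \phi & \varphi\end{smallmatrix}\right)$ together with the distinguished entry $-x_3x_1 + yx_3$. Because $\alpha\varphi - \beta\phi \neq 0$, these equations can be solved to express $a_1x_3^2$ and $a_2x_3^2$ as explicit right $A$-multiples of $a_3$; feeding the entry $-x_3x_1 + yx_3$ back in and performing a normal-form computation in the basis $\{x_1^ix_2^j(x_3x_1)^kx_3^l\}$, together with the domain property, should force $a_3 = ax_3$ and then $a_1 = ax_1$, $a_2 = ax_2$, giving $v = a(x_1,x_2,x_3)\in\im d_4$. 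I expect this to be the main obstacle: it requires the explicit (Maple-computed) entries of $M$ and a careful term-order analysis, and the nondegeneracy $\alpha\varphi - \beta\phi \neq 0$ is precisely what makes the elimination of the $x_3^2$-terms possible.

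Finally, with $H_0 = k$ and $H_1 = H_3 = H_4 = 0$, the identity $1 = 1 - h_{H_1} + h_{H_2} - h_{H_3} + h_{H_4}$ collapses to $h_{H_2} = 0$, whence $H_2 = 0$. Thus the augmented complex is a free resolution of ${}_Ak$ of length $4$. Since every entry of every differential is homogeneous of positive degree, and so lies in $\fm$, the resolution is minimal; therefore $\uTor_4^A(k,k) = k(-5)\neq 0$ and $\gl A = \operatorname{pd}_A k = 4$, as claimed.
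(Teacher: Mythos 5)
Your overall strategy coincides with the paper's: prove exactness at every spot except the middle one and let the Hilbert-series Euler characteristic kill $H_2$; your bookkeeping, your treatment of $H_0$, and the concluding minimality argument are fine. Two smaller corrections first. Injectivity of $d_4$ cannot be deduced from ``$A$ is a domain'': when the lemma is applied (to $\mathcal{D}(1,h)$, $\mathcal{E}(1,\gamma)$, etc.) the algebra is known only by its presentation, and being a domain is not among the lemma's hypotheses; instead, the hypothesized basis shows that right multiplication by $x_3$ sends distinct basis monomials to distinct basis monomials, so $x_3$ is a right nonzerodivisor and $d_4$ is injective --- this is exactly what the paper uses. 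Likewise $H_1=0$ is not a consequence of the diamond lemma: it is the elementary fact that for \emph{any} presentation $A=k\langle x_1,x_2,x_3\rangle/(r_1,\dots,r_4)$, every syzygy of $(x_1,x_2,x_3)$ is an $A$-combination of the rows of $d_2$ (using only that $T_{\geq 1}=\bigoplus_i Tx_i$ is a free left module over the free algebra $T$); your conclusion is right but the cited reason is not the operative one.

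The genuine gap is the step you yourself flag as the ``main obstacle'': exactness at $F_3$, i.e.\ $\ker d_3\subseteq \im d_4$, which is the entire point of hypothesis (iii) and of the lemma. You do not prove it, and the plan you sketch would not run as stated: the fourth column of $M$ has entries $\beta x_3,\varphi x_3$ of degree one against the degree-two entries $\alpha x_3^2,\phi x_3^2$ of the first column, so the two column equations do not combine into a single $2\times 2$ system over $A$ for $a_1x_3^2, a_2x_3^2$; moreover the $(3,4)$-entry of $M$ is an unspecified $*$, so ``feeding back'' produces expressions in unknown data. Your prediction that one needs the explicit Maple-computed entries of $M$ is exactly backwards --- the lemma is formulated so that only the displayed entries matter. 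The paper's argument reorders the steps: from the first column alone, $\alpha a_1x_3^2+\phi a_2x_3^2+a_3(-x_3x_1+yx_3)=0$; since $Ax_3$ is precisely the span of the basis monomials with $l\geq 1$ (and $x_3^2x_1\in Ax_3$ by $r_3$), every term of this equation lies in $Ax_3$ except $a_3'x_3x_1$, where $a_3'$ is the part of $a_3$ supported on monomials with $l=0$; but $a_3'x_3x_1$ is supported on basis monomials $x_1^ix_2^j(x_3x_1)^{k+1}$ having $l=0$, so $a_3'=0$ and $a_3=wx_3$. Subtracting $w\,d_4=(wx_1,wx_2,wx_3)\in\im d_4$, one may assume $a_3=0$; then the first and fourth columns give $(\alpha a_1+\phi a_2)x_3^2=0$ and $(\beta a_1+\varphi a_2)x_3=0$, and right-regularity of $x_3$ together with $\alpha\varphi-\beta\phi\neq 0$ forces $a_1=a_2=0$. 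Without an argument of this kind, your proof establishes everything except the one statement the lemma was created to deliver.
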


\begin{proof}
It is clear from the form of the $k$-basis for $A$ that $x_3$ is a
right nonzerodivisor; thus $d_4$ is injective. We claim that to
prove \eqref{E3.4.1} is exact, we need only show that
$\ker d_3=\im d_4$.  If this equation holds, then there is only one
possible nonzero cohomology of the complex $X$ given by
\eqref{E3.4.1}, namely $H^2(X)$. Since $X$ is a complex, the
Hilbert series of the terms satisfy
$\sum_{i} (-1)^i h_{H^i(X)}(t)= \sum_i (-1)^i h_{X^i}(t)=0$, where
the latter equality follows from the assumed Hilbert series of $A$.
This implies that $H^2(X)=0$ and hence $X$ is exact, proving the claim.

Now suppose that $(v_1, v_2, v_3)\in \ker d_3$. Then
$\alpha v_1 x_3^2+ \phi v_2 x_3^2+ v_3 (-x_3x_1+yx_3)=0$. Express
$v_3$ as a linear combination of $x_1^i x_2^j (x_3x_1)^k x_3^l$.
Then $\alpha v_1 x_3^2+ \phi v_2 x_3^2+ v_3 (-x_3x_1+yx_3)=0$ implies
 that $v_3= wx_3$ for some $w$.  Working modulo $\im d_4$ and
replacing $(v_1,v_2,v_3)$ by $(v_1-wx_1,v_2-wx_2,v_3-wx_3)$, we may
assume that $v_3=0$. Therefore we have $\alpha v_1 x_3^2+ \phi v_2
x_3^2=0$. Since $x_3$ is a right nonzerodivisor, $\alpha v_1+\phi
v_2=0$.  Similarly, by the fourth column of the equation $(v_1, v_2,
v_3) M=0$, we have $\beta v_1+\varphi v_2=0$.  Since $\alpha
\varphi-\beta\phi \neq 0$, we have $v_1= v_2=0$. This shows that
$(v_1,v_2,v_3)\in \im d_4$. It follows that $\ker d_3 = \im d_4$ and
so $X$ is exact by the argument above.  Since $X$ is the free
resolution of the trivial module $k$, $A$ has global dimension four.
\end{proof}

We are now ready to solve the system given by setting the
coefficients of $r_6-r_8$ equal to $0$, under the conditions $m = 0,
p \neq 1, n = 1$.  By Lemma~\ref{xxlem3.2}, since we want $A$ to be a
domain, we can also assume that $d \neq bc$.  By
Lemma~\ref{xxlem3.3}, if $b \neq 0$ then some twist-equivalent
algebra has $b = 1$.  Thus it suffices to assume that $b = 0$ or $b
=1$.  In the latter case (which holds for all solutions we find) we
will twist the $b$ back and include it as a parameter when we
present the solutions. Solving the system with the constraints
listed above gives $8$ families of solutions, all of which turn out
to be AS-regular, and which we list in the following pages along
with some of their important properties.  We state many of the
properties of these algebras without further proof.  Any claims
about twist equivalence follow from Lemma~\ref{xxlem3.3}, and the
other properties are generally checked by a tedious but
straightforward repeated application of the relations. See
Remark~\ref{xxrem3.14} for more about our computational methods.

\begin{example}
\label{xxex3.5}
Let ${\mathcal A}(b,q)$ be the algebra with the relations
\begin{align*} r_1: x_2x_1 & = \frac{1}{q}x_1x_2 \ \ (q \neq 1) \\
r_2: x_3x_2 & = -\frac{1}{q^2b} x_3 x_1 + x_1x_3 +bx_2x_3  \ \ (b \neq 0) \\
r_3: x_3^2x_1 & = -q^3b^2 x_1x_3^2 + (q^2 + q)bx_3 x_1 x_3 \\
r_4:  x_3 x_1^2 & = -q^3b^2 x_1^2x_3 + (q^2 + q)bx_1x_3x_1.
\end{align*}
\begin{enumerate}
\item[(3.5.1)]
${\mathcal A}(b,q)$ is isomorphic to a graded twist of ${\mathcal A}(1,q)$
[Lemma \ref{xxlem3.3}].
\item[(3.5.2)]
The opposite ring of ${\mathcal A}(b,q)$ is isomorphic to
${\mathcal A}(1/b,1/q)$.
\item[(3.5.3)]
There is a graded algebra isomorphism $\sigma:{\mathcal A}(b,q)\to
{\mathcal A}(q^2b,q^{-1})$ determined by
$$\sigma: x_1\to b x_2, x_2\to q^{-2}b^{-1} x_1, x_3\to x_3.$$
\item[(3.5.4)]
The element $y:=x_3x_1-q^2 bx_1x_3$ is normal in ${\mathcal A}(b,q)$
and ${\mathcal A}(b,q)/(y)$ is an AS-regular algebra of dimension 3.
\end{enumerate}

Let $\mc{A} = {\mathcal A}(b,q)$ for some fixed $b,q$ and let $y$ be
the normal element given above. Because we know that $h_{\mc{A}}(t)
= \displaystyle \frac{1}{(1-t)^3(1-t^2)}$  by construction and
$h_{\mc{A}/(y)}(t) = \displaystyle \frac{1}{(1-t^3)}$, the normal
element $y$ must be regular. Thus $\mc{A}$ satisfies the condition
in Lemma~\ref{xxlem1.3}(c); in particular, it is AS-regular and has
the other good properties listed there.

The algebra $\mc{A}$ is also an iterated Ore extension.   Let $B$ be
the subalgebra generated by $x_1, x_3$ and $y=x_3x_1-q^2bx_1x_3$.
Then
$$B=k\langle x_1,y, x_3 \rangle/(yx_1-qb x_1y,
x_3y-qbx_3y,x_3x_1-q^2bx_1x_3-y)$$ is an iterated Ore extension,
hence it is AS-regular of dimension 3. Now one may check that
$\mc{A} \cong B[x_2; \phi, \delta]$, where $\displaystyle \sigma:
x_1 \to \frac{1}{q} x_1, x_3 \to \frac{1}{b} x_3$ and $\displaystyle
\delta: x_1 \to 0, x_3 \to \frac{1}{q^2 b^2} x_3 x_1-\frac{1}{b} x_1
x_3$. Thus Lemma~\ref{xxlem1.3}(a) also applies to the algebra
$\mc{A}$.
\end{example}

\begin{example}
\label{xxex3.6} Let ${\mathcal B}(b)$ denote the algebra with
relations
\begin{align*} r_1: x_2x_1 & = - x_1x_2 \\
r_2: x_3x_2 & = -\frac{1}{b}x_3 x_1 + x_1x_3 + bx_2x_3 \ \ (b \neq 0)\\
r_3: x_3^2x_1 & = b^2 x_1x_3^2  \\
r_4:  x_3 x_1^2 & = -b^3 x_1 x_2x_3 +b x_1x_3x_1 + b^2x_2 x_3x_1.
\end{align*}
\begin{enumerate}
\item[(3.6.1)]
${\mathcal B}(b)$ is isomorphic to a graded twist of ${\mathcal B}(1)$.
\item[(3.6.2)]
The opposite ring of ${\mathcal B}(b)$ is isomorphic to ${\mathcal B}(1/b)$.
\item[(3.6.3)]
There is a graded algebra automorphism of ${\mathcal B}(b)$ determined by
$$\sigma: x_1\to b x_2, x_2\to b^{-1}x_1, x_3\to x_3.$$
\item[(3.6.4)]
The element $y:=x_3x_1-b^2 x_2x_3$ is a normal element, and the factor
algebra ${\mathcal B}(b)/(y)$ is a 3-dimensional AS-regular algebra
generated by 3 elements.   The element $y$ is regular by the same
proof as for the family $\mc{A}$, and so ${\mathcal B}(b)/(y)$
satisfies Lemma \ref{xxlem1.3}(c). Therefore ${\mathcal B}(b)$ has
the properties listed in Lemma \ref{xxlem1.3}.
\end{enumerate}

\end{example}

\begin{example}
\label{xxex3.7} 
Let ${\mathcal C}(b)$ denote the algebra with relations
\begin{align*}r_1: x_2x_1 & = - x_1x_2 \\
r_2: x_3x_2 & = -\frac{1}{b}x_3 x_1 + x_1x_3 + bx_2x_3 \ \ (b \neq 0) \\
r_3: x_3^2x_1 & = b^2 x_1x_3^2  \\
r_4:  x_3 x_1^2 & = b^3 x_1 x_2x_3 +b x_1x_3x_1 + b^2x_2 x_3x_1.
\end{align*}
\begin{enumerate}
\item[(3.7.1)]
${\mathcal C}(b)$ is isomorphic to a graded twist of ${\mathcal C}(1)$.
\item[(3.7.2)]
The opposite ring of ${\mathcal C}(b)$ is isomorphic to ${\mathcal C}(1/b)$.
\item[(3.7.3)]
There is a graded algebra automorphism of ${\mathcal C}(b)$ determined by
$$\sigma: x_1\to b x_2, x_2\to b^{-1}x_1, x_3\to x_3.$$
\item[(3.7.4)]
The element $y:=x_3x_1-bx_1x_3$ is a normal element, and the factor
algebra ${\mathcal C}(b)/(y)$ is a 3-dimensional AS-regular algebra
generated by 3 elements.  Similarly as above, $y$ is a regular
element and Lemma~\ref{xxlem1.3}(c) holds.
\end{enumerate}
\end{example}

\begin{example}
\label{xxex3.8} Let ${\mathcal D}(b,h)$ be the algebra with
relations
\begin{align*} r_1: x_2x_1 & = -x_1x_2 \\
r_2: x_3x_2 & = -\frac{1}{b}x_3 x_1 + x_1x_3 +bx_2x_3 \ \ ( b \neq 0)\\
r_3: x_3^2x_1 & = b^2 x_1x_3^2 \\
r_4:  x_3 x_1^2 & = \bigg(\frac{h}{b^2} - b^2\bigg)x_1^2x_3  + h x_2^2 x_3.
\end{align*}
\begin{enumerate}
\item[(3.8.1)]
${\mathcal D}(b,h)$ is isomorphic to a graded twist of
${\mathcal D}(1, hb^{-4})$.
\item[(3.8.2)]
The opposite ring of ${\mathcal D}(b,h)$ is isomorphic to
${\mathcal D}(1/b,hb^{-8})$.
\item[(3.8.3)]
There is a graded algebra isomorphism
$\sigma:{\mathcal D}(b,h)\to {\mathcal D}(b,2b^4-h)$ determined by
$$\sigma: x_1\to b x_2, x_2\to b^{-1}x_1, x_3\to x_3.$$
\end{enumerate}

We need further analysis to see why the algebra is AS-regular. Following
Lemma \ref{xxlem1.3}(b), we may assume that $b = 1$. We construct a
potential free resolution of $\mc{D}:=\mc{D}(1,h)$ of the form
$$0\to {\mathcal D}(-5)\xrightarrow{d_4} {\mathcal D}(-4)^{\oplus 3}
\xrightarrow{d_3} {\mathcal D}(-3)^{\oplus 2}\oplus
{\mathcal D}(-2)^{\oplus 2} \xrightarrow{d_2} {\mathcal D}(-1)^{\oplus 3}
\xrightarrow{d_1} {\mathcal D} \xrightarrow{d_0}{_{\mathcal D}k} \to 0$$
where here
\begin{gather*}
d_3: \begin{pmatrix} x_3^2 &  -x_1x_3 & -x_2  &  x_3\\
                 0  & 0 & - x_1  &  x_3\\
- x_3x_1 & (h-1) x_1^2+h x_2^2 &0 & -x_1- x_2
\end{pmatrix},  \\
d_2:
\begin{pmatrix} -x_2   &   -x_1   &   0   \\
                 -x_3   &   -x_3   &x_1+x_2\\
                 -x_3^2 &     0    &x_1x_3 \\
   -x_3x_1&     0    &(h-1) x_1^2+ h x_2^2
\end{pmatrix},
\end{gather*}
and the other maps are as in Lemma~\ref{xxlem3.4}.  It is
straightforward to check that this is a complex, and it is then a
free resolution of $k$ by Lemma~\ref{xxlem3.4}.  Thus $\mc{D}$ has
global dimension $4$.

We also need to show that $\mc{D}$ has enough normal elements. It
follows from the relations $r_1$ and $r_2$ that $x_1^2+x_2^2$ is
central in ${\mathcal D}$, and it follows from the relations $r_2$
and $r_3$ that $x_3^2$ is central. Using the relations $r_1$ and $r_4$,
one sees that $x_1^2$ is central in $B:={\mathcal D}/(x_1^2+x_2^2, x_3^2)$.
Let $C$ be the factor ring $B/(x_1^2)$. Then $x_3x_1-x_1x_3$ is normal
in $C$. Finally, given the leading terms we have, one may check that
$C/(x_3x_1-x_1x_3)$ is spanned by
$\{1, x_1, x_2, x_3, x_1x_2,x_1x_3, x_2x_3, x_1x_2x_3\}$, and so is
finite-dimensional.  Lemma \ref{xxlem1.3}(d) now applies to $\mc{D}$,
and using also Lemma \ref{xxlem1.3}(b), $\mc{D}(b,h)$ is strongly
noetherian, Auslander regular and Cohen-Macaulay.
\end{example}

\begin{example}
\label{xxex3.9} Let ${\mathcal E}(b, \gamma)$ be the algebra with
relations
\begin{align*} r_1: x_2x_1 & = -x_1x_2 \\
r_2: x_3x_2 & = -\frac{1}{b}x_3 x_1 + x_1x_3 +bx_2x_3  \ \ ( b \neq 0) \\
r_3: x_3^2x_1 & = b^3 x_2 x_3^2  \\
r_4:  x_3 x_1^2 & = \gamma b^3 x_1 x_2x_3 + b x_1x_3x_1 +
b^2x_2x_3x_1 \ \ (\gamma = \pm \sqrt{-1}.)
\end{align*}
\begin{enumerate}
\item[(3.9.1)]
${\mathcal E}(b, \gamma)$ is isomorphic to a graded twist of
${\mathcal E}(1, \gamma)$.
\item[(3.9.2)]
The opposite ring of ${\mathcal E}(b, \gamma)$ is isomorphic to
${\mathcal E}(b^{-1}, \gamma^{-1})$.
\item[(3.9.3)]
There is a graded algebra automorphism of ${\mathcal E}(b, \gamma)$
determined by
$$\sigma: x_1\to b x_2, x_2\to b^{-1}x_1, x_3\to x_3.$$
\end{enumerate}

In this and the remaining examples in this case, the proof of
AS-regularity is by the same method as in Example \ref{xxex3.8}:  we
twist away $b$ and then construct an explicit free resolution of $k$
and a sequence of normal elements.  Let ${\mathcal E}:={\mathcal
E}(1, \gamma)$.  One may check using Lemma~\ref{xxlem3.4} that
$_{\mathcal E}k$ has a free resolution of the form given in that
lemma, where
\begin{gather*}
d_3: \begin{pmatrix} -x_3^2 &  x_2x_3 & - x_2  &  -x_3 \\
                 0  & 0 & - x_1  &  - x_3\\
x_1x_3+ x_2x_3- x_3x_1 &\gamma x_1x_2 &0 & x_1+ x_2
\end{pmatrix}, \\
d_2:
\begin{pmatrix} -x_2   &   -x_1   &   0   \\
                 -x_3   &   -x_3   &x_1+x_2\\
                 -x_3^2 &     0    &x_2x_3 \\
   -x_3x_1+x_1x_3+x_2x_3&     0    &\gamma x_1x_2
\end{pmatrix}.
\end{gather*}
Further, ${\mathcal E}$ has normal elements $x_1x_2, x_3^2,
x_1^2+x_2^2$. It is easily checked that $B:={\mathcal E}/ (x_1x_2,
x_3^2, x_1^2+x_2^2)$ has a normal element $x_3x_1-x_2x_3$ and that
$B/(x_3x_1-x_2x_3)$ is finite dimensional.  Now Lemma
\ref{xxlem1.3}(b,d) applies to show $\mc{E}(b, \gamma)$ is strongly
noetherian, Auslander regular and Cohen-Macaulay.
\end{example}

\begin{example}
\label{xxex3.10} Let ${\mathcal F}(b, \gamma)$ denote the algebra
with relations
\begin{align*} r_1: x_2x_1 & = \gamma^2 x_1x_2 \ \
(\gamma = \text{primitive}\ \sqrt[3]{1}) \\
r_2: x_3x_2 & = -\frac{\gamma}{b}x_3 x_1 + x_1x_3 +bx_2x_3 \ \
(b \neq 0)  \\
r_3: x_3^2x_1 & = \gamma^2 b^3 x_2 x_3^2  -bx_3 x_1 x_3 \\
r_4:  x_3 x_1^2 & =\gamma b^3 x_1 x_2x_3 + \gamma^2 b^2 x_2x_3x_1.
\end{align*}
\begin{enumerate}
\item[(3.10.1)]
${\mathcal F}(b, \gamma)$ is isomorphic to a graded twist of
${\mathcal F}(1, \gamma)$.
\item[(3.10.2)]
The opposite ring of ${\mathcal F}(b, \gamma)$ is isomorphic to
${\mathcal F}(b^{-1}, \gamma^{-1})$.
\end{enumerate}
As in the previous examples, we let ${\mathcal F}={\mathcal F}(1,\gamma)$,
and there is a free resolution of $_{\mathcal F}k$  satisfying
Lemma~\ref{xxlem3.4} with
\begin{gather*}
d_3: \begin{pmatrix} \gamma^2 x_3^2 & -\gamma x_3x_1+x_2x_3&-\gamma x_2 &  x_3\\
                 0  & 0 & \gamma^2 x_1  &  x_3\\
-\gamma^2 x_2x_3+ x_3x_1 & x_1x_2 &0 & -\gamma x_1- x_2
\end{pmatrix}, \\
d_2: \begin{pmatrix} -x_2   &   \gamma^2 x_1   &   0   \\
                 -\gamma x_3   &   -x_3   &x_1+x_2\\
                 -x_3^2 &     0    &\gamma^2 x_2x_3-x_3x_1 \\
\gamma^2 x_2x_3-x_3x_1 &     0    & \gamma x_1x_2
\end{pmatrix}.
\end{gather*}
The algebra ${\mathcal F}$ has no normal elements of degree 1 and 2,
but has three normal elements of degree 3: $x_1^3+x_2^3, x_3^3$ and
$x_1^2x_2$. Let $B:={\mathcal F}/(x_1^3+x_2^3, x_3^3, x_1^2x_2)$.
Then $B$ has a normal element
$$d=(x_3x_1)^3+x_1(x_3x_1)^2x_3-\gamma^2x_2(x_3x_1)^2x_3
-x_1^2(x_3x_1)x_3^2-x_1x_2(x_3x_1)x_3^2$$ and $B/(d)$ is finite
dimensional over $k$.  Now Lemma \ref{xxlem1.3}(b,d) applies to show
$\mc{F}(b, \gamma)$ is AS-regular with the usual good properties.
\end{example}

\begin{example}
\label{xxex3.11} Let ${\underline{\mathcal F}}(b, \gamma)$ be the
algebra with relations
\begin{align*} r_1: x_2x_1 & = \gamma^2 x_1x_2 \ \
(\gamma = \text{primitive}\ \sqrt[3]{1})\\
r_2: x_3x_2 & = -\frac{\gamma}{b}x_3 x_1 + x_1x_3 +bx_2x_3 \ \ (b \neq 0) \\
r_3: x_3^2x_1 & = \gamma^2 b^2x_1x_3^2 -b^3x_2 x_3^2 -bx_3 x_1 x_3 \\
r_4:  x_3 x_1^2 & = -b^4 x_2^2 x_3 + \gamma^2 b x_1x_3x_1 -b^2x_2x_3x_1.
\end{align*}
\begin{enumerate}
\item[(3.11.1)]
${\underline{\mathcal F}}(b, \gamma)$ is isomorphic to a graded twist
of ${\underline{\mathcal F}}(1, \gamma)$.
\item[(3.11.2)]
The opposite ring of ${\underline{\mathcal F}}(b, \gamma)$ is
isomorphic to  ${\underline{\mathcal F}}(b^{-1}, \gamma^{-1})$.
\item[(3.11.3)]
There is a graded algebra isomorphism $\sigma:{\mathcal F}(b, \gamma)
\to {\underline{\mathcal F}}(\gamma^2 b, \gamma^2)$ determined by
$$\sigma: x_1\to b x_2, x_2\to \gamma b^{-1} x_1, x_3\to x_3.$$
\end{enumerate}
The isomorphism above shows why we have named this family
$\underline{\mc{F}}(b, \gamma)$.  Since the previous example proved
that $\mc{F}(b, \gamma)$ has all desired properties, we need not
study this example further.
\end{example}

\begin{example}
\label{xxex3.12} Let ${\mathcal G}(b, \gamma)$  denote the algebra
with relations
\begin{align*} r_1: x_2x_1 & = -x_1x_2 \\
r_2: x_3x_2 & = -\frac{1}{b}x_3 x_1 + x_1x_3 +bx_2x_3  \ \ (b \neq 0)\\
r_3: x_3^2x_1 & = b^3 x_2 x_3^2  \\
r_4:  x_3 x_1^2 & = \frac{b^2}{2\gamma} x_1^2x_3 +
\gamma b^4 x_2^2x_3 \ \ (\gamma = \frac{1 \pm i}{2}).
\end{align*}
\begin{enumerate}
\item[(3.12.1)]
${\mathcal G}(b, \gamma)$ is isomorphic to a graded twist of
${\mathcal G}(1, \gamma)$.
\item[(3.12.2)]
The opposite ring of ${\mathcal G}(b, \gamma)$ is isomorphic to
${\mathcal G}(b^{-1}, \overline{\gamma})$.
\item[(3.12.3)]
There is a graded algebra automorphism of ${\mathcal G}(b, \gamma)$
determined by
$$\sigma: x_1\to b x_2, x_2\to b^{-1}x_1, x_3\to x_3.$$
\end{enumerate}
Similarly as in the previous examples, Setting ${\mathcal G}=
{\mathcal G}(1,\gamma)$, there is
a free resolution of $_{\mathcal G}k$ satisfying Lemma~\ref{xxlem3.4},
where
$$d_3: \begin{pmatrix} 0      & 0      & -x_2  &  x_3\\
                 x_3^2  &-x_2x_3 & -x_1  &  x_3\\
                 -x_3x_1&\bar{\gamma} x_1^2+\gamma x_2^2 & 0& -x_1-x_2
\end{pmatrix},\ d_2:
\begin{pmatrix} -x_2   &   -x_1   &   0   \\
                 -x_3   &   -x_3   &x_1+x_2\\
                 -x_3^2 &     0    &x_2x_3 \\
                 -x_3x_1&     0    &\bar{\gamma}x_1^2+\gamma x_2^2
\end{pmatrix}.$$
One can check that ${\mathcal G}$ has normal elements $x_3^2,
x_1^2+x_2^2$. Set $B:={\mathcal G}/(x_3^2, x_1^2+x_2^2)$. Then $B$
has a normal element $x_1^2$, and  $C:=B/(x_1^2)$ has normal
elements $x_3x_1-x_1x_3$ and $x_3x_2-x_2x_3$. Finally, the factor
ring $C/(x_3x_1-x_1x_3,x_3x_2-x_2x_3)$ is finite dimensional.   This
is sufficient to show that ${\mathcal G}(b,\gamma)$ is strongly
noetherian, Auslander regular and Cohen-Macaulay by Lemma
\ref{xxlem1.3}(b,d).
\end{example}

\subsection{Case $\ell \neq 0$, $p = m = 1$}
\label{xxsec3.4}
This is the case of the Jordan type $r_1$, $x_2x_1 = x_1x_2 + x_1^2$.
We make a reduction to $r_2$.  Replacing $x_2$ by $ax_1 + x_2$
(which does not change the relation $r_1$), we can assume that $r_2$
 has the form $x_3x_2 = n x_1x_3 + bx_2x_3$.  In other words, we may
assume that $a = 0$.

We use Maple to solve for the conditions that all coefficients of
$r_6, r_7$, and $r_8$ are $0$, assuming that $p=1, m =1, a= 0$.
Clearly we can also assume that we do not have $c=d= 0$, in which
case $r_3$ would lead to a non-domain.  With these constraints
there is only one solution family, as follows.

\begin{example}
\label{xxex3.13} Let ${\mathcal H}(b)$  be the algebra with the four
relations
\begin{align*}r_1: x_2x_1 & = x_1x_2 + x_1^2 \\
r_2: x_3x_2 & = 2b  x_1x_3 + b x_2x_3 \ \  \\
r_3: x_3^2x_1 & = -b^2 x_1x_3^2 + 2 b x_3x_1x_3 \\
r_4:  x_3x_1^2 & =  -b^2 x_1^2x_3 + 2b x_1x_3x_1.
\end{align*}
\begin{enumerate}
\item[(3.13.1)]
${\mathcal H}(b)$ is a graded twist of ${\mathcal H}(1)$ by the
automorphism sending $x_1 \to x_1$, $x_2 \to x_2$, $x_3 \to
b^{-1}x_3$.
\item[(3.13.2)]
The opposite ring of ${\mathcal H}(b)$ is isomorphic to ${\mathcal H}(-b)$.
\item[(3.13.3)]
$\mc{H}(1)$ has a normal element $y:=x_3x_1 -x_1x_3$, and the factor
ring ${\mathcal H}(1)/(y)$ is an AS-regular algebra of dimension 3.
\end{enumerate}
The same argument as in Example~\ref{xxex3.5} shows that $y$ is a
regular element, and so Lemma~\ref{xxlem1.3}(b,c) applies to show
that $\mc{H}(b)$ is AS-regular and has all good properties.  The
example ${\mathcal H}(b)$ can also be written as an Ore extension:
If $B = k\langle x_1, x_3 \rangle/(r_3, r_4)$, then $B$ is a cubic
AS-regular algebra, and ${\mathcal H}(b) \cong B[x_2; \sigma,
\delta]$ for the appropriate $\sigma, \delta$.  Thus
Lemma~\ref{xxlem1.3}(a) also applies to this example.
\end{example}

\subsection{Other leading terms for $r_4$}
\label{xxsec3.5} We still need to consider whether we get any
AS-regular algebras in which the leading term of $r_4$ is smaller
than $x_3x_1^2$.  The answer is no, but we can only prove this
through a computer calculation to exhaust the possibilities.  As the
method is entirely similar as in the preceding cases, we do not give
all of the details of the messy coefficients that arise.  The
interested reader can find more details at the first author's
webpage; see Remark~\ref{xxrem3.14} below.

Suppose first that $\ell = 0$ but that the leading term of $r_4$ is
the next highest possibility $x_2x_3x_1$, so $k \neq 0$.   We can
assume that $k = -1$ and so $r_4$ is
\[
r_4: x_2 x_3 x_1 =(f x_1^2  + g x_1 x_2  + h x_2^2 )x_3 + jx_1x_3x_1,
\]
while the relations $r_1,r_2,r_3,r_5$ are unchanged.   We note that an
easy calculation using the multi-graded Hilbert series \eqref{E2.1.1}
shows that since $A$ is AS-regular we should have $\dim_k A_{(2,2)} = 6$
and $\dim_k A_{(3,1)} = 7$.

Now the leading terms of the relations $r_1-r_5$ already imply that
$A_{(2,2)}$ is spanned by the $6$ monomials
\begin{equation}
\label{E3.13.1}\tag{E3.13.1}
\{x_3x_1x_3x_1, x_3x_1^2 x_3, x_1 x_3 x_1 x_3,
x_2^2 x_3^2, , x_1 x_2 x_3^2, x_1^2 x_3^2\},
\end{equation}
so these are a basis for $A_{(2,2)}$.   The overlaps $x_3^2x_1x_2$
between $r_5$ and $r_3$ and $x_3x_2x_3x_1$ between $r_2$ and $r_4$
produce respective relations $r_6$ and $r_7$ involving monomials in
\eqref{E3.13.1}, and so every coefficient of $r_6$ and $r_7$ is $0$.
We suppress the formulas for these relations, but mention that the
coefficient in $r_6$ of $x_3x_1x_3x_1$ is $-qba-qn+ea-ze$, just as
it was in the $\ell = -1$ case (though $r_6$ as a whole is now
different), so in particular this forces $-qba-qn+ea-ze = 0$.

We also now have two overlaps in multi-degree $(3,1)$ to resolve,
$x_3x_1x_2x_1$ and $x_2x_3x_1x_2$.  It is easy to check that the
first leads to a genuine new relation $r_8$ with leading term
$px_3x_1^2x_2$.  Then the given leading terms of $r_1, r_2, r_4, r_5$
plus the new leading term $x_3x_1^2x_2$ imply that $A_{(3,1)}$ is
spanned by the seven monomials
$$\{x_3x_1^3,   x_1x_3x_1^2, x_1^2x_3x_1, x_2^3x_3,x_1x_2^2x_3,
x_1^2x_2x_3,x_1^3x_3 \},$$
which must be then be a basis for $A_{(3,1)}$.  Resolving
$x_2x_3x_1x_2$ gives a new degree-$(3,1)$ relation $r_9$, every
coefficient of which must now be $0$.

We now have the same three cases for $r_1$ as before, and the same
reductions to $r_2$ in each case are allowed.  If $p=1, m = 0$, we
can reduce $r_2$ so that $a=b=0, n = 1$.  Since the coefficient
$(ea-q-qba-aqe)$ of $r_6$ cannot be $0$, we get no solutions. If $m
= 0, p\neq 1$, then we can assume that $a, b \neq 0$, $n = 1$. Again
in order for $A$ to be a domain, Lemma~\ref{xxlem3.2} shows we can
assume that $d \neq bc$.  Solving for the coefficients of $r_6, r_7,
r_9$ to be zero under these constraints using Maple gives no
solutions.   Finally, if $p=1, m = 1$, we can adjust $r_2$ so that
$a = 0$, and we may also assume that we do not have $c=d=0$ (or else
$A$ will not be a domain).  Setting the coefficients of $r_6, r_7,
r_9$ to $0$, there are again no solutions.

The remaining possibilities for the leading terms of $r_4$ are
comparatively easily eliminated. If we have $\ell = 0, k = 0, h \neq
0$, we can assume that $r_4$ has the form
$$
r_4: x_2^2x_3 =(f x_1^2  + g x_1 x_2)x_3 + jx_1x_3x_1
$$
and the relations $r_1,r_2,r_3,r_5$ are unchanged from the previous
cases. With the leading terms we have, the six monomials
\[
\{x_3x_1x_3x_1, x_3x_1^2 x_3, x_1 x_3 x_1 x_3, x_2x_3x_1x_3, x_1 x_2
x_3^2, x_1^2 x_3^2\}
\]
span $A_{(2,2)}$ and so must be a basis.   One may check that
resolving the overlap $x_3x_2^2x_3$ between $r_4$ and $r_2$
leads to a new degree (2,2)-relation among these monomials
$r_6:  -j x_3x_1x_3x_1 + \dots$, which forces $j = 0$. But then
the relation $r_4$ shows that $A$ is not a domain, since $r_4$
has the form $f x_3$ where $f$ is not a relation.

The final case is where $\ell = k = h = 0$.  In this case $r_4$ has
the form $x_1 f$ for a nonzero $f$ and $A$ cannot be a domain.

\begin{remark}
\label{xxrem3.14} We give some brief comments about the
computational methods which were used to obtain the results above.
Our main program is a simple Maple program which reduces an element
of the free algebra (with coefficients involving unknown parameters)
using given relations. This was used to calculate the relations
$r_6, r_7,$ etc. arising from overlaps in the diamond lemma. The
Maple solve command was used to solve the simultaneous system of
equations given by setting all of the coefficients of these
relations to $0$. The reader might be concerned at the lack of proof
that the Maple solve command really finds all solutions to such a
complicated system.  We did verify the computation in the most
important case by checking that when $\ell \neq 0, p \neq 1, n = 1$,
the system given by the coefficients of $r_6-r_8$ can actually be
solved by hand to give the same set of solutions.

We also wrote programs in Maple to calculate the matrix $d_3$ in the
free resolution of $_A k$ as in Lemma~\ref{xxlem3.4}, and to find
some normal elements of low degree, as was needed for examples
$\mc{D} -\mc{G}$.  The calculation of $d_3$ could presumably have
also been accomplished by trial and error, but it would have been
difficult to find the required sequences of normal elements by hand,
especially in example $\mc{F}$ where one normal element has degree
$6$.  Our program which reduces elements using the relations can
also be used to check the output of these programs, for instance
that the claimed normal elements really are.

The reader wishing to verify our computation can find the Maple code
for these programs, which we make freely available, on the first
author's website:   http://www.math.ucsd.edu/$\sim$drogalsk.
\end{remark}

\section{\bf Proof of the main results}
\label{xxsec4}

Our main theorems all quickly follow from the results of the last section.

\begin{proof}[Proof of Theorem \ref{xxthm0.1}]
The theorem is just a summarization of the classification we did in
the last section, which showed that any properly $\mb{Z}^{\times
2}$-graded AS-regular algebra which is a domain with three
generators is either a normal extension, or else is isomorphic to
one of the examples $\mc{A}(b,q), \mc{B}(b), \cdots, \mc{H}(b)$ (we
may exclude the family $\underline{\mc{F}}$, so there are eight
families here).  Note that although we sometimes passed to the
opposite ring in our reductions, our calculations of the opposite
rings of our examples show that each opposite ring is always
isomorphic to an algebra already on the list (in fact, an example in
the same family.)  The result follows.
\end{proof}

\begin{proof}[Proof of Theorem \ref{xxthm0.2}]
By our classification, if $A$ is a properly ${\mathbb Z}^{\times
2}$-graded noetherian AS-regular algebra with $3$ generators that is
not a normal extension, then again $A$ is isomorphic to one of the
algebras $\mc{A}-\mc{H}$.   In section~\ref{xxsec3} we showed that
each of these algebras (or some graded twist) satisfies at least one
of the conditions (a),(c), or (d) in Lemma~\ref{xxlem1.3}, and so by
that lemma $A$ is also strongly noetherian, Auslander regular and
Cohen-Macaulay.

If $A$ is a normal extension (of a 3-dimensional AS-regular algebra),
then it is strongly noetherian, Auslander regular and Cohen-Macaulay
by Lemma \ref{xxlem1.3}(c).
\end{proof}

\begin{proof}[Proof of Corollary \ref{xxcor0.3}]
By \cite[Proposition 1.4]{LPWZ}, $A$ is generated by either 2, or 3,
or 4 elements. If $A$ is generated by 3 elements, this is Theorem
\ref{xxthm0.2}. If $A$ is generated by 2 elements, the assertion
follows from \cite[Theorems A, B, C]{LPWZ}. If $A$ is generated by
4 elements, the assertion  follows from \cite[Theorems 0.1 and 0.2]{ZZ1}
and Lemma \ref{xxlem1.3}(a).
\end{proof}

\section{\bf Graded isomorphisms and automorphisms}
\label{xxsec5}

In this section, we study graded isomorphisms between the AS-regular
algebras classified above, for generic values of the parameters.
The first task is to show in the next lemma that any such graded
isomorphism has a limited form; in particular, it preserves the bigrading.

\begin{lemma}
\label{xxlem5.1} Suppose that $A$ and $A'$ are any two algebras among
the examples $\mc{A}$-$\mc{G}$ given in section~\ref{xxsec3}, where
$A$ has relations $r_1: x_2x_1 = px_1x_2$ and
$r_2: x_3x_2 = ax_3x_1 + x_1x_3 + bx_2x_3$, and $A'$ has respective
parameters $p', a', b'$ in its first two relations.

Similarly, let $B$ and $B'$ be any two algebras among the family
$\mc{H}$ in section~\ref{xxsec3}, where $B$ has relations
$\widehat{r}_1: x_2x_1 = x_1x_2 + x_1^2$ and $\widehat{r}_2: x_3x_2
= 2 \beta x_1x_3 + \beta x_2x_3$ and $B'$ has parameter $\beta'$ in
its second relation.  Assume that $p, b, p', b', \beta, \beta' \neq
0, 1$ and $a, a' \neq 0, -1$.
\begin{enumerate}
\item
If there is a graded isomorphism $\sigma: A \to A'$, one of the
following two cases must occur:
\begin{enumerate}
\item[(i)]
$\sigma$ is given by $x_1\to \lambda x_1, x_2 \to \lambda x_2,
x_3\to \mu x_3$ for some $\lambda, \mu \in k^{\times}$; or
\item[(ii)]
$\sigma$ is given by $x_1\to \lambda x_2, x_2\to \rho x_1, x_3\to
\mu x_3$ for some  $\lambda, \mu, \rho \in k^{\times}$.
\end{enumerate}
\item
If there is a graded isomorphism $\sigma: B \to B'$, then $\sigma$
is determined by $x_1\to \lambda x_1, x_2\to \lambda x_2, x_3\to \mu
x_3$ for some $\lambda, \mu \in k^{\times}$.
\item
There does not exist a graded isomorphism $\sigma: A \to B$.
\end{enumerate}
\end{lemma}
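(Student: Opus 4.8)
The plan is to convert each graded isomorphism into a linear statement about degree-one pieces and then exploit the quadratic relations. Since every algebra here is connected graded and generated in degree one, a graded isomorphism $\sigma$ is determined by $M := \sigma|_{A_1}\colon A_1 \to A'_1$, and $M$ must carry the space of quadratic relations $R := \ker(A_1 \otimes A_1 \to A_2)$ of $A$ onto that of the target. By Lemma~\ref{xxlem2.1} this space is two-dimensional, spanned by $r_1$ (of multidegree $(2,0)$) and $r_2$ (of multidegree $(1,1)$). I would record each relation $\sum c_{ij}\, x_i \otimes x_j$ as a $3\times 3$ matrix $C=(c_{ij})$, so that $M\otimes M$ acts on such matrices by the induced congruence-type action $C\mapsto MCM^{t}$, and the hypothesis becomes $(M\otimes M)(R)=R'$.

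The crux is to show $M$ preserves the subspace $V_{12} := kx_1 + kx_2$, i.e. that $M$ respects the $\mathbb{Z}^{\times 2}$-grading in degree one. For this I would characterize the line $kr_1$ intrinsically: a nonzero $\omega\in R$ lies in $W\otimes W$ for some two-dimensional $W\subseteq A_1$ if and only if $\omega\in kr_1$. One direction is clear since $r_1\in V_{12}\otimes V_{12}$. For the converse, write $\omega=\lambda r_1+\mu r_2$; membership in some $W\otimes W$ forces $\mathrm{rank}\,\omega\le 2$ together with the coincidence of the row space and column space of $\omega$. A short computation gives that $\det\omega$ is proportional to $\lambda\mu^{2}(pab+1)$, so generically the determinant already rules out $\mu\neq 0$, while the pure direction $\omega=\mu r_2$ is excluded because its row and column spaces agree only when $1+ab=0$. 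Since $kr_1$ is thus the unique line of $R$ sitting inside a square $W\otimes W$, $M$ must send $kr_1$ to $kr_1'$, and reading off the common (row $=$ column) space forces $M(V_{12})=V_{12}'$.

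Once $M(V_{12})=V_{12}'$, I would write $M(x_3)=u+\gamma x_3$ with $u\in V_{12}'$ and $\gamma\in k^{\times}$, and impose $(M\otimes M)(r_2)\in R'$. Splitting this identity by $\mathbb{Z}^{\times 2}$-multidegree, its $(1,1)$-part must be a multiple of $r_2'$ while its $(2,0)$-part must be a multiple of $r_1'$; the latter is assembled solely from the cross terms containing $u$, and solving forces $u=0$, so $M$ preserves the bigrading. Finally $M|_{V_{12}}$ must send $r_1$ to a multiple of $r_1'$: in the quantum case $x_2x_1=px_1x_2$ with $p\neq 1$ the only such linear maps are the diagonal and the coordinate-swap ones, yielding forms (i) and (ii) of part~(a); in the Jordan case $x_2x_1=x_1x_2+x_1^2$ the relation has a single repeated fixed direction, forcing $M|_{V_{12}}$ to be scalar and giving part~(b).

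For part~(c) the same reductions apply: any graded isomorphism $\sigma\colon A\to B$ would carry the distinguished line $kr_1$ of $A$ to that of $B$, hence satisfy $\sigma(V_{12})=V_{12}$, and so restrict to a graded isomorphism of the degree-$(2,0)$ subalgebras $C_A=k\langle x_1,x_2\rangle$ and $C_B=k\langle x_1,x_2\rangle$, each two-dimensional AS-regular by Corollary~\ref{xxcor2.2}. But $C_A$ is a quantum plane with $p\neq 1$ and $C_B$ is a Jordan plane, and these are non-isomorphic, since the quantum relation is diagonalizable with two distinct fixed directions whereas the Jordan relation admits a single repeated one, so no linear change of coordinates converts one normal form into the other (cf. the proof of \cite[Lemma 3.7]{KKZ1}); this contradiction proves~(c). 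I expect the main obstacle to be the uniform verification of the genericity needed for the intrinsic characterization of $kr_1$ and for forcing $u=0$: the determinant identity degenerates to $\det\omega\equiv 0$ exactly when $pab+1=0$, which happens for the family $\mc{F}$ (where $p=\gamma^2$ and $ab=-\gamma$ for a primitive cube root of unity $\gamma$), so there the determinant is useless and one must instead verify directly that the row and column spaces of $\lambda r_1+\mu r_2$ coincide only when $\mu=0$.
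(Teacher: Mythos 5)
Your overall strategy (transfer the problem to the two-dimensional space $R\subseteq A_1\otimes A_1$ of quadratic relations and characterize the line $kr_1$ intrinsically) is a genuinely different route from the paper, which instead analyzes the matrix entries $a_{ij}$ of $\sigma$ directly by decomposing $\sigma(r_1)$ and $\sigma(r_2)$ into bigraded components of the \emph{target} algebra and using that $x_3^2\neq 0$ and that $A$ has no degree-$2$ relation in $x_1,x_3$ alone. However, your proposal has a genuine gap at its central step: the claimed characterization ``$\omega\in W\otimes W$ for some $2$-dimensional $W$ iff $\omega\in kr_1$'' is false for most of the families covered by the lemma. Indeed $r_2 = x_3\otimes(x_2-ax_1)-(x_1+bx_2)\otimes x_3$, and whenever $ab=-1$ the two vectors $x_2-ax_1$ and $x_1+bx_2$ are proportional, so $r_2$ itself lies in $W\otimes W$ with $W=k(x_2-ax_1)+kx_3$. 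The condition $ab=-1$ holds for \emph{every} family with $p=-1$: the families $\mc{B},\mc{C},\mc{D},\mc{E},\mc{G}$ all have $a=-1/b$, and $\mc{A}(b,q)$ has $ab=-q^{-2}$, so $\mc{A}(b,-1)$ is also affected. Note that the lemma's hypothesis $a\neq -1$ does not exclude $ab=-1$. So in these cases $R$ contains \emph{two} lines sitting in squares, and nothing in your argument prevents $M\otimes M$ from swapping them, i.e.\ sending $kr_1\to kr_2'$, in which case $M(V_{12})=k(x_2-a'x_1)+kx_3\neq V_{12}'$ and the rest of the proof (including part (c)) collapses. Ironically, the case you flagged as the obstacle, family $\mc{F}$ where $pab+1=0$ kills the determinant, is actually unproblematic (there $ab=-\gamma\neq -1$, and a direct check of row-plus-column spaces shows $kr_1$ is the unique line in a square); the real failure is in the $p=-1$ families. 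The gap is repairable: if $M\otimes M(r_2)\in kr_1'$, then writing $M(x_3)=v_1x_1+v_2x_2$ and comparing the $x_1\otimes x_1$ and $x_2\otimes x_2$ coefficients of $M(x_3)\otimes M(x_2-ax_1)-bM(x_2-ax_1)\otimes M(x_3)$ forces $v_1=v_2=0$ (using $a'\neq 0$, $b\neq 1$), contradicting invertibility; but this step is entirely absent from your proposal, and without it the key claim $M(V_{12})=V_{12}'$ is unjustified.

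Two smaller inaccuracies in the endgame: preserving $r_1$ alone does \emph{not} pin down the maps as tightly as you assert. In the quantum case, any $\mathrm{diag}(\lambda_1,\lambda_2)$ with $\lambda_1\neq\lambda_2$ preserves $x_2x_1=px_1x_2$, so the equality of the two scalars in form (i) must be extracted from the $(1,1)$-part of $\sigma(r_2)$ being a multiple of $r_2'$ (this is exactly how the paper gets $a_{11}=a_{22}$). In the Jordan case your claim is simply wrong as stated: the triangular maps $x_1\to\alpha x_1$, $x_2\to\gamma x_1+\alpha x_2$ preserve $x_2x_1=x_1x_2+x_1^2$ for any $\gamma$, so $\widehat{r}_1$ alone does not force $M|_{V_{12}}$ to be scalar; one needs the relation $\widehat{r}_2$ (linear independence of $x_3x_1$ and $x_1x_3$ in the target) to kill $\gamma$. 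Both points are fixable within your framework, but as written parts (a)(i) and (b) are not proved.
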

\begin{proof}
(a) Any graded algebra isomorphism $\sigma: A \to A'$ is determined
by $\sigma(x_i)=\sum_{j=1}^3 a_{ij} x_j$ for an invertible $3\times
3$-matrix $(a_{ij})_{3\times 3}$.  Applying $\sigma$ to the relation
$r_1$ of $A$ and restricting to the degree $(0,2)$-piece gives rise
to the equation $a_{13}a_{23}=pa_{23}a_{13}$. Since $p\neq 1$, we
have $a_{13}a_{23}=0$, which means that either $a_{13}=0$ or
$a_{23}=0$.  We assume that $a_{13} = 0$; the analysis in the other
case is symmetric.  Now applying $\sigma$ to $r_2$ and considering
the degree $(0,2)$-piece gives $a_{33}a_{23} = ba_{23}a_{33}$. Since
$b \neq 0$, we have $a_{33}= 0$ or $a_{23} = 0$.   Suppose that
$a_{33} = 0$.  Then $\sigma(x_1), \sigma(x_3)$ have degree $(1,0)$
in $A'$, and so must span $kx_1 + kx_2$ in $A'$. Thus the relation
$r'_1$ of $A'$ pulls back under $\sigma$ to give some degree 2
relation in $A$ involving only $x_1$ and $x_3$.  Since $A$ has no
such relation, this is a contradiction.  Thus $a_{23} = 0$. We
conclude that $\sigma(x_1), \sigma(x_2) \in kx_1 + kx_2$, and
$a_{33} \neq 0$.

Now considering the relation $\sigma(r_1)$ again and using that
$p, p' \neq 1$, it is easy to prove that either (i)
$\sigma(x_1) = a_{11} x_1, \sigma(x_2) = a_{22} x_2, p = p'$, or
(ii) $\sigma(x_1) = a_{12} x_2, \sigma(x_2) = a_{21} x_1, p' = p^{-1}$.

Consider case (i).   Looking at $\sigma(r_2)$ and restricting to
degree $(2,0)$ gives a relation
$$(a_{31}x_1+a_{32}x_2)(- a a_{11}x_1+ a_{22}x_2)
= (a_{11}x_1+ b a_{22}x_2)(a_{31}x_1+a_{32}x_2).$$ Since $r'_1$ is
the only relation in $A'$ of degree $(2,0)$ and $a \neq -1, b \neq
1$, it is easy to see that $a_{32} = a_{31} = 0$. Finally,
restricting $\sigma(r_2)$ to degree $(1,1)$ gives a relation
$$a_{33}x_3(-a a_{11}x_1 + a_{22} x_2) =
(a_{11} x_1 + b a_{22} x_2)a_{33}x_3,$$
which must be a multiple of $r'_2$. It is easy to check that
this implies that $a_{11} = a_{22}$.

The argument in case (ii) is similar and we leave it to the reader.

(b)  The proof is similar to the proof of (a) and we omit it.

(c)  Suppose that $\sigma: A \to B$ is a graded isomorphism.  The
same argument as in the first paragraph of the proof of (a) still
applies to $\sigma: A \to B$, since this argument used only the form
of the relations of $A$. So $\sigma(x_1), \sigma(x_2) \in kx_1 +
kx_2$. In particular, $\sigma$ restricts to an isomorphism $k
\langle x_1, x_2 \rangle/(r_1) \to k \langle x_1, x_2
\rangle/(\widehat{r}_1)$.  This is impossible, since the quantum
plane and the Jordan plane are well-known to be non-isomorphic; or,
a simple argument similar to the above easily proves this.
\end{proof}

We call an isomorphism of the form given in part (a)(i) or part (b)
of the lemma {\it trivial} and an isomorphism of the form given in
part (a)(ii) {\it quasi-trivial}.  The lemma now makes it easy to
determine all possible isomorphisms and automorphisms of the
algebras classified earlier, when the parameters are not special. To
make this precise, suppose that $A$ is one of the algebras of types
$\mc{A}-\mc{H}$ given in Section~\ref{xxsec3}.  In order to give a
uniform treatment, we include the family $\underline{\mc{F}}$.  We
say that $A$ is \emph{generic} if the parameters in $r_1, r_2$
satisfy the hypothesis of Lemma~\ref{xxlem5.1}.  Explicitly, this is
equivalent to $b \neq 1, q^2b \neq 1$ for type $\mc{A}(b,q)$, to $b
\neq 1, b \neq \gamma$ for type $\mc{F}(b, \gamma)$ and type
$\underline{\mc{F}}(b, \gamma)$, and to $b \neq 1$ for $\mc{H}(b)$
and all of the other types.  Since the point of this section is to
investigate isomorphisms and automorphisms among these algebras, we
consider any two generic AS-regular algebras in the families
$\mc{A}-\mc{H}$ on our list to be the same at the moment if and only
if they have exactly the same relations; if they have different
relations we call them {\it distinct}.

\begin{theorem}
\label{xxthm5.2} Consider the generic AS-regular algebras of
types $\mc{A}-\mc{H}$.
\begin{enumerate}
\item
If $A$ is a generic AS-regular algebra, the graded automorphism
group of $A$ is isomorphic either to $k^{\times} \times k^{\times}$
or to $k^{\times} \times k^{\times} \times {\mathbb Z}/(2)$.  The
first case occurs if $A$ is of type $\mc{A}(b,q)$ with $q \neq -1$,
$\mc{D}(h,b)$ with $h \neq b^4$, $\mc{F}$, $\underline{\mc{F}}$, or
$\mc{H}$. The second case occurs if $A$ is of type $\mc{A}(b, -1),
\mc{B}, \mc{C}, \mc{D}(h,b)$ with $h = b^4$, $\mc{E}$, or $\mc{G}$.
\item
All distinct generic AS-regular algebras are pairwise non-isomorphic
except for the isomorphisms given in (3.5.3), (3.8.3) and (3.11.3).
\end{enumerate}
\end{theorem}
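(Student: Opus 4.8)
The plan is to reduce everything to Lemma~\ref{xxlem5.1}, which says that any graded isomorphism between two of our generic algebras (and in particular any graded automorphism of one of them) is either \emph{trivial}, of the form $x_1 \to \lambda x_1, x_2 \to \lambda x_2, x_3 \to \mu x_3$, or \emph{quasi-trivial}, of the form $x_1 \to \lambda x_2, x_2 \to \rho x_1, x_3 \to \mu x_3$ (the latter only for types $\mc{A}$--$\mc{G}$). Two preliminary observations will do most of the work. First, because every relation $r_1,\dots,r_4$ is $\mb{Z}^{\times 2}$-homogeneous and $x_1,x_2$ carry the same multidegree $(1,0)$, a trivial map scales each $r_i$ by a single scalar and hence preserves it exactly; thus the trivial maps form a subgroup $T\cong k^\times\times k^\times$ of $\Aut(A)$ for every one of our algebras. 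Second, and conversely, if a trivial map gives an isomorphism $A\to A'$ then, since it carries each relation of $A$ to a scalar multiple of the \emph{identical} polynomial, the two algebras must have literally the same four relations (using that each multidegree supports a unique relation by Lemma~\ref{xxlem2.1}). In other words, a trivial isomorphism between generic algebras forces $A=A'$ as presented algebras.

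For part (a), Lemma~\ref{xxlem5.1} shows that $\Aut(A)$ equals either $T$ or $T$ together with the coset of quasi-trivial automorphisms (for type $\mc{H}$ only $T$ occurs). Since the composite of two quasi-trivial maps is trivial, the quasi-trivial automorphisms, when they exist, form a single coset of $T$, so $\Aut(A)$ is $T$ or contains $T$ with index $2$. To identify the larger group I would exhibit one quasi-trivial automorphism $\sigma$ from the isomorphisms already recorded in Section~\ref{xxsec3}: each of (3.5.3), (3.6.3), (3.7.3), (3.8.3), (3.9.3), (3.11.3), (3.12.3) gives a quasi-trivial isomorphism of the relevant family, and each fixes $x_3$, so $\sigma^2=\mathrm{id}$; moreover $\sigma$ centralizes $T$ because $T$ scales $x_1$ and $x_2$ equally. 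Hence $\langle T,\sigma\rangle\cong k^\times\times k^\times\times\mb{Z}/(2)$ whenever a quasi-trivial automorphism exists. The dichotomy in the statement then comes from asking, family by family, whether the listed quasi-trivial isomorphism is actually an automorphism: (3.5.3) maps $\mc{A}(b,q)$ to $\mc{A}(q^2b,q^{-1})$, an automorphism exactly when $q=-1$; (3.8.3) maps $\mc{D}(b,h)$ to $\mc{D}(b,2b^4-h)$, an automorphism exactly when $h=b^4$; (3.11.3) sends $\mc{F}$ to the different family $\underline{\mc{F}}$, so neither has a quasi-trivial automorphism; and for $\mc{B},\mc{C},\mc{E},\mc{G}$ the listed map is already an automorphism for all parameters. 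This matches the two lists in (a).

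For part (b), the two preliminary observations combine into a clean coset argument. Suppose $A\cong A'$ are distinct generic algebras. By Lemma~\ref{xxlem5.1}(c) no algebra of type $\mc{H}$ is isomorphic to one of type $\mc{A}$--$\mc{G}$, and within type $\mc{H}$ every isomorphism is trivial and hence forces identical relations, so distinct $\mc{H}$'s are non-isomorphic. For the remaining types, an isomorphism $A\to A'$ is trivial or quasi-trivial; a trivial one would force $A=A'$ by the second observation, contradicting distinctness, so it must be quasi-trivial. Now if $\sigma_0\colon A\to A''$ is the quasi-trivial isomorphism supplied by the appropriate line of Section~\ref{xxsec3}, then $\sigma\circ\sigma_0^{-1}\colon A''\to A'$ is trivial, whence $A'=A''$; that is, the target of \emph{any} quasi-trivial isomorphism out of $A$ is forced to be the single algebra named in the listed isomorphism. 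Reading off those targets, the only way to obtain a distinct isomorphic partner is through (3.5.3) with $q\neq-1$, through (3.8.3) with $h\neq b^4$, or through (3.11.3); in every other family the listed quasi-trivial map is an automorphism and so produces no new isomorphism. This is exactly the list of exceptions in (b).

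The routine computational input is the verification of the explicit isomorphisms (3.5.3), (3.8.3), (3.11.3) and the quasi-trivial automorphisms of $\mc{B},\mc{C},\mc{E},\mc{G}$, together with the elementary fact that distinct families have genuinely different relation forms (so that identical relations really does pin down the family); these are asserted in Section~\ref{xxsec3} and can be cited. The main obstacle to get right is the uniqueness step in the coset argument: it relies crucially on the second preliminary observation that a trivial isomorphism between generic algebras is forced to be an equality of presentations, which in turn uses that each multidegree carries a one-dimensional space of relations (Lemma~\ref{xxlem2.1}), so that the scalars introduced by a trivial map cannot disguise a genuine change of relations.
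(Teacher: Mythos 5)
Your proposal is correct and takes essentially the same approach as the paper's proof: reduce everything to the trivial/quasi-trivial dichotomy of Lemma~\ref{xxlem5.1}, note that trivial maps form a subgroup $k^{\times}\times k^{\times}$ and that a trivial isomorphism between generic algebras forces identical presentations, and then use the explicit isomorphisms listed in Section~\ref{xxsec3} together with the composition trick (quasi-trivial composed with inverse quasi-trivial is trivial) to show each algebra has a unique quasi-trivial target, from which both the automorphism-group dichotomy and the list of exceptional isomorphisms follow. Your additional check that the chosen quasi-trivial involution centralizes the torus, so the index-two extension really splits as a direct product, makes explicit a point the paper leaves implicit, but it is the same argument.
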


\begin{proof}
Let $A, A'$ be generic AS-regular algebras.   By Lemma~\ref{xxlem5.1}
we see that any graded isomorphism $A \to A'$ is either trivial or
quasi-trivial.  Since $A$ is $\mb{Z}^{\times 2}$-graded, the graded
automorphism group of $A$ includes at least the trivial automorphisms
$x_1 \to \lambda x_1, x_2 \to \lambda x_2, x_3 \to \mu x_3$ for any
$\lambda, \mu \in k^{\times} \times k^{\times}$.   Moreover, if $A$
has any quasi-trivial automorphism $\sigma$, then clearly $\sigma^2$
is trivial.  So the graded automorphism group is isomorphic to
$k^{\times} \times k^{\times}$ if $A$ has no quasi-trivial automorphism,
or to $k^{\times} \times k^{\times} \times {\mathbb Z}/(2)$ if $A$ does
have a quasi-trivial automorphism.

If $A$ is of type $\mc{H}$, it has no quasi-trivial isomorphism to
another generic AS-regular algebra, by Lemma~\ref{xxlem5.1}.  Now
for a given generic AS-regular algebra $A$ of some type
$\mc{A}-\mc{G}$, we claim there is exactly one generic AS-regular
algebra $\wt{A}$ such that there exists a quasi-trivial isomorphism
$A \to \wt{A}$.  In our listing of AS-regular algebras in
Section~\ref{xxsec3}, we gave an explicit such isomorphism $\sigma:
A \to \wt{A}$ in each case (in case $\mc{F}$ or
$\underline{\mc{F}}$, take (3.11.3) or its inverse, respectively.)
Conversely, if $\rho: A \to A'$ is another quasi-trivial
isomorphism, then $\sigma \rho^{-1}: A' \to \wt{A}$ is a trivial
isomorphism. Precomposing with some trivial automorphism $\tau: A'
\to A'$ we can get an isomorphism $\sigma \rho^{-1} \tau: A' \to
\wt{A}$ which sends $x_i \to x_i$ for all $i$.  This is easily seen
to force $A'$ and $\wt{A}$ to have exactly the same relations,
proving the claim. In our listing of the AS-regular algebras in
Section~\ref{xxsec3}, we gave an explicit quasi-trivial automorphism
of the AS-regular algebras $\mc{A}(b,q)$ with $q = -1$, $\mc{B}$,
$\mc{C}$, $\mc{D}(h,b)$ with $h = b^4$, $\mc{E}$, or $\mc{G}$.  Thus
$\wt{A} = A$ for these algebras.  On the other hand, for the other
types $\mc{A}(b,q)$ with $q \neq -1$, $\mc{D}(h,b)$ with $h \neq
b^4$, $\mc{F}$, and $\underline{\mc{F}}$, we gave a quasi-trivial
isomorphism from $A$ to a distinct algebra, and so $\wt{A} \neq A$.
This finishes the proof of part (a).

To prove part (b), note that if $\sigma: A \to A'$ is a trivial
isomorphism between generic AS-regular algebras, then precomposing
with a trivial automorphism we see similarly as in the previous
paragraph that in fact $A'$ and $A$ have exactly the same relations.
On the other hand, if $\sigma: A \to A'$ is a quasi-trivial
isomorphism, then $A' = \wt{A}$ as defined above, and $\wt{A} \neq
A$ exactly for the isomorphisms in (3.5.3), (3.8.3), and (3.11.3).
Note that these isomorphisms match up pairs of examples in family
$\mc{A}$ and pairs of examples in family $\mc{D}$, and match up the
entire family $\mc{F}$ with the family $\underline{\mc{F}}$.
\end{proof}

We note that this final theorem justifies the remark made in the
introduction that the eight families $\mc{A}-\mc{H}$ (excluding
$\underline{\mc{F}}$) in our classification are generically pairwise
non-isomorphic.

\providecommand{\bysame}{\leavevmode\hbox
to3em{\hrulefill}\thinspace}
\providecommand{\MR}{\relax\ifhmode\unskip\space\fi MR }
\providecommand{\MRhref}[2]{%
  \href{http://www.ams.org/mathscinet-getitem?mr=#1}{#2}
} \providecommand{\href}[2]{#2}


\begin{thebibliography}{10}

\bibitem{ASc}
M. Artin and W. Schelter,
Graded algebras of global dimension $3$,
{\it Adv. in Math.} {\bf  66} (1987), no. 2, 171--216.

\bibitem{ATV1}
M. Artin, J. Tate, and M. Van den Bergh,
Some algebras associated to automorphisms of elliptic curves,
{\it The Grothendieck Festschrift,} Vol. I, 33--85, Progr.
Math., 86, Birkh{\"a}user Boston, Boston, MA, 1990.

\bibitem{ATV2}
M. Artin, J. Tate, and M. Van den Bergh,
Modules over regular algebras of dimension $3$,
{\it Invent. Math.} {\bf 106} (1991), no. 2, 335--388.

\bibitem{Be}
G.M. Bergman, The diamond lemma for ring theory,
{\it Adv. in Math.} {\bf 29} (2) (1978) 178--218.


\bibitem{CV}
T. Cassidy and M. Vancliff, Generalizations of graded Clifford
algebras and of complete intersections, {\it J. Lond. Math. Soc.}
(2) {\bf 81} (2010), no. 1, 91--112.

\bibitem{KKZ1}
E. Kirkman, J. Kuzmanovich and J.J. Zhang,
Rigidity of graded regular algebras,
{\it Trans. Amer. Math. Soc.}, {\bf 360} (2008), 6331-6369.


\bibitem{KKZ2}
E. Kirkman, J. Kuzmanovich and J.J. Zhang,
Gorenstein subrings of invariants under Hopf algebra actions,
{\it J. Algebra} {\bf 322} (2009), no. 10, 3640--3669.

\bibitem{KKZ3}
E. Kirkman, J. Kuzmanovich and J.J. Zhang,
Shephard-Todd-Chevalley theorem for skew polynomial rings,
{\it Algebr. Represent. Theory} {\bf 13} (2010), no. 2, 127--158.

\bibitem{Le-Sm2}
L. Le Bruyn and S. P. Smith,
Homogenized ${\mathfrak sl}_2$,
{\it Proc. Amer. Math. Soc.} {\bf 118} (1993), no. 3, 725--730.


\bibitem{Le-Sm-VdB}
L. Le Bruyn, S. P. Smith and M. Van den Bergh,
Central extensions of three-dimensional Artin-Schelter regular algebras,
{\it Math. Z.} {\bf 222} (1996), no. 2, 171--212.



\bibitem{LPWZ}
D.-M. Lu, J.H. Palmieri, Q.-S. Wu, and J.J. Zhang,
Regular algebras of dimension 4 and their {$A\sb \infty$}-{E}xt-algebras,
{\it Duke Math. J.}
  \textbf{137} (2007), no.~3, 537--584.

\bibitem{Rot}
J.J. Rotman, \emph{An introduction to homological algebra},
Second edition. Universitext. Springer, New York, 2009.

\bibitem{SVanc1}
B. Shelton and M. Vancliff,
Some Quantum ${\mathbb P}^3$s with One Point,
{\it Comm. Alg.} {\bf 27} No. 3 (1999), 1429-1443.

\bibitem{SVanc2}
B. Shelton and M. Vancliff,
Embedding a Quantum Rank Three Quadric in a Quantum ${\mathbb P}^3$,
{\it Comm. Alg.} {\bf 27} No. 6 (1999), 2877-2904.



\bibitem{Sk1}
E. K. Sklyanin,
Some algebraic structures connected with the Yang-Baxter equation (Russian),
{\it Funktsional. Anal. i Prilozhen} {\bf 16} (1982), no. 4, 27--34.

\bibitem{Sk2}
E. K. Sklyanin,
Some algebraic structures connected with the Yang-Baxter equation.
Representations of a quantum algebra, (Russian)
{\it Funktsional. Anal. i Prilozhen} {\bf 17} (1983), no. 4, 34--48.

\bibitem{Sm-Sta} S. P. Smith and J. T. Stafford,
Regularity of the four-dimensional Sklyanin algebra,
{\it Compositio Math.} {\bf 83} (1992), no. 3, 259--289.


\bibitem{SmZh}
S. P. Smith and J.J. Zhang, A note, unpublished.

\bibitem{Sta94b} J. T. Stafford,
Regularity of algebras related to the Sklyanin algebra,
{\it Trans. Amer. Math. Soc.} {\bf 341} (1994), no. 2, 895--916.

\bibitem{Stephenson-Vanc}
D.R. Stephenson and M. Vancliff,
Some finite quantum ${\mathbb P}^3$s that are infinite modules
over their centers,
{\it J. Algebra}, {\bf 297} (2006), no. 1, 208--215.

\bibitem{VVanc1}
K. Van Rompay and M. Vancliff,
Embedding a Quantum Nonsingular Quadric in a Quantum ${\mathbb P}^3$,
{\it J. Algebra} {\bf 195} No. 1 (1997), 93-129.

\bibitem{VVanc2}
K. Van Rompay and M. Vancliff,
Four-dimensional Regular Algebras with Point Scheme a Nonsingular
Quadric in ${\mathbb P}^3$,
{\it Comm. Alg.} {\bf  28} No. 5 (2000), 2211-2242.

\bibitem{VVW}
K. Van Rompay, M. Vancliff and L. Willaert,
Some Quantum ${\mathbb P}^3$s with Finitely Many Points,
{\it Comm. Alg.} {\bf 26} No. 4 (1998), 1193-1208.

\bibitem{Vanc1}
M. Vancliff,
Quadratic Algebras Associated with the Union of a Quadric and a
Line in ${\mathbb P}^3$,
{\it J. Algebra} {\bf 165} No. 1 (1994), 63-90.

\bibitem{Zh1}
J.J. Zhang,
Connected graded Gorenstein algebras with enough normal elements,
\emph{ J. Algebra} {\bf 189} (1997), no. 2, 390--405.

\bibitem{Zh2}
J.J. Zhang,
Twisted graded algebras and equivalences of graded categories,
{\it Proc. London Math. Soc.} (3) {\bf 72} (1996), no. 2, 281--311.


\bibitem{ZZ1}
J.J. Zhang and J. Zhang,
Double extension regular algebras of type (14641),
{\it J. Algebra} {\bf 322} (2009), no. 2, 373--409.

\bibitem{ZZ2}
J.J. Zhang and J. Zhang, Double ore extensions,
{\it Journal of Pure and Applied Algebra}, {\bf 212}, (2008), 2668-2690.


\end{thebibliography}
\end{document}